\documentclass[11pt,a4paper]{amsart}

\usepackage{mathtools,amssymb,amsthm}
\usepackage[l2tabu,orthodox]{nag} 
\usepackage[all, warning]{onlyamsmath} 
\usepackage{bm}
\usepackage{mathrsfs} 
\usepackage{stmaryrd} 
\usepackage{url}
\usepackage{mymacro}

\usepackage{color}

\definecolor{darkgreen}{rgb}{0.0, 0.6, 0.0}

\usepackage{tikz}
\usetikzlibrary{intersections, calc, arrows, cd}

\usepackage[colorlinks,citecolor=darkgreen,linkcolor=red]{hyperref}
\RequirePackage{cleveref}

\usepackage{geometry}
\numberwithin{equation}{section}
\numberwithin{figure}{section}
\numberwithin{table}{section}
\allowdisplaybreaks 

\usepackage{enumitem}
\setenumerate[1]{label={\upshape(\arabic*)},nosep}
\setenumerate[2]{label={\upshape(\alph*)},nosep}
\setitemize{nosep}
\setdescription{nosep}

\newlist{enumarabic}{enumerate}{1}
\setlist*[enumarabic]{label={\upshape(\arabic*)}, nosep}

\newlist{enumAlph}{enumerate}{1}
\setlist*[enumAlph]{label={\upshape(\Alph*)}, nosep}

\newlist{enumalph}{enumerate}{1}
\setlist*[enumalph]{label={\upshape(\alph*)}, nosep}

\newlist{enumRoman}{enumerate}{1}
\setlist*[enumRoman]{label={\upshape(\Roman*)}), nosep}

\newlist{enumroman}{enumerate}{1}
\setlist*[enumroman]{label={\upshape(\roman*)}, nosep}

 \crefname{section}{\S\!}{\S\S\!}
 \crefname{subsection}{\S\!}{\S\S\!}
 \crefname{subsubsection}{\S\!}{\S\S\!}
 \crefname{equation}{equation}{equations}

\theoremstyle{plain}
\newtheorem{theorem}{Theorem}[section]
 \crefname{theorem}{Theorem}{Theorems}
\newtheorem{theorem-definition}[theorem]{Theorem-Definition}
 \crefname{theorem-definition}{Theorem-Definition}{Theorem-Definition}
\newtheorem{lemma}[theorem]{Lemma}
 \crefname{lemma}{Lemma}{Lemmas}
\newtheorem{proposition}[theorem]{Proposition}
 \crefname{proposition}{Proposition}{Propositions}
\newtheorem{proposition-definition}[theorem]{Proposition-Definition}
 \crefname{proposition-definition}{Proposition-Definition}{Proposition-Definition}
\newtheorem{corollary}[theorem]{Corollary}
 \crefname{corollary}{Corollary}{Corollaries}
\newtheorem{claim}[theorem]{Claim}
 \crefname{claim}{Claim}{Claims}

 \crefname{question}{Question}{Questions}

 \crefname{problem}{Problem}{Problems}

 \crefname{conjecture}{Conjecture}{Conjectures}
\newtheorem{theoremA}{Theorem}
 \crefname{theoremA}{Theorem}{Theorems}

\theoremstyle{definition}
\newtheorem{definition}[theorem]{Definition}
 \crefname{definition}{Definition}{Definitions}

 \crefname{fact}{Fact}{Facts}

 \crefname{notation}{Notation}{Notations}

 \crefname{observation}{Observation}{Observations}
\newtheorem{remark}[theorem]{Remark}
 \crefname{remark}{Remark}{Remarks}

 \crefname{example}{Example}{Examples}

 \crefname{construction}{Construction}{Constructions}

\theoremstyle{remark}

\newenvironment{pfclaim}[1][$\Diamond$]
{\def\claimQED{{#1}}\noindent {\em Proof of the claim. }}
{\leavevmode\unskip\penalty9999 \hbox{}\nobreak\hfill
    \quad\hbox{\claimQED}{\smallskip}}

\def\Zar{\mathrm{Zar}}
\def\Itor{\textrm{$I$-}\mathrm{tor}}
\def\Iztor{\textrm{$I_0$-}\mathrm{tor}}

\def\Jtor{\textrm{$J$-}\mathrm{tor}}
\def\ator{\textrm{$a$-}\mathrm{tor}}

\begin{document}
\title[A characterization of perfectoid towers in terms of conormal cones]{A characterization of perfectoid towers in terms of conormal cones}

\author[K.~Hayashi]{Kazuki Hayashi}
\address{Department of Mathematics, Institute of Science Tokyo, 2-12-1 Ookayama, Meguro, Tokyo 152-8551}
\email{hazuki0694@gmail.com}

\thanks{2020 {\em Mathematics Subject Classification\/}: 14G45, 13B02}%

\keywords{perfectoid towers, torsion parts, conormal cones.}

\begin{abstract}
We characterize perfectoid towers in terms of conormal cones rather than torsion parts.
This result is deduced from a refined study of the relationship between torsion with respect to a principal ideal and the associated conormal cone, building on the work of O.~Gabber and L.~Ramero.
\end{abstract}

\maketitle

\setcounter{tocdepth}{2}
\tableofcontents
\section{Introduction}

The theory of perfectoid spaces, introduced by P.~Scholze, has played a central role in commutative algebra in mixed characteristic. In order to apply perfectoid techniques to Noetherian rings, the notion of perfectoid towers was introduced by S.~Ishiro, K.~Nakazato, and K.~Shimomoto \cite{INS25}; see also \cite{Ha26a,Ha26c,HIS26,IS25,Ish26}. The definition of perfectoid towers is based on a characterization of perfectoid rings in terms of their $p$-torsion parts, originally due to O.~Gabber and L.~Ramero \cite[Theorem 16.3.75]{GR24}; see also \cite[Theorem 3.50]{INS25}. This characterization can in turn be deduced from the following one in terms of conormal cones.

\begin{theorem}[Gabber--Ramero]
\label{thm:GR16.3.64}
Let $A$ be a commutative ring. Then $A$ is a perfectoid ring \emph{(}in the sense of \cite[Definition 3.5]{BMS1}\emph{)} if and only if $A$ contains an element $\varpi$ satisfying the following conditions.
  \begin{enumerate}
  \item $p\in\varpi^pA$, and $A$ is $\varpi$-adically complete and separated.
  \item The graded ring homomorphism
  \[
  \gr_{(\varpi)}^\bullet(A)\to \gr_{(\varpi^p)}^\bullet(A);\quad x\ \mathrm{mod}\ (\varpi)^{n+1} \mapsto x^p\ \mathrm{mod}\ (\varpi^p)^{n+1}
  \]
  is an isomorphism. \emph{(}In particular, the absolute Frobenius of $A/\varpi^pA$ induces an isomorphism $A/\varpi A\xr{\cong} A/\varpi^pA$.\emph{)}
  \end{enumerate}
\end{theorem}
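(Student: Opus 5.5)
We use the characterization of \cite[Lemma 3.10]{BMS1}: a ring $A$ is perfectoid if and only if there is $\varpi\in A$ with $p\in\varpi^pA$, $A$ is $\varpi$-adically complete and separated, Frobenius $\Phi\colon A/\varpi A\to A/\varpi^pA$ is surjective, and $\ker\theta$ is principal (equivalently, $\Phi$ is bijective once $\varpi$ is chosen well). We also use the standard fact that $\varpi$ may be taken with a compatible system of $p$-power roots, $\varpi=(\varpi^\flat)^\sharp$ up to a unit. Throughout, the map in (2) is well defined in each degree because $(x+y)^p\equiv x^p+y^p$ modulo $pxy$-terms, and $p\in\varpi^pA$.

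\emph{Necessity.} Let $A$ be perfectoid and choose $\varpi=(\varpi^\flat)^\sharp$ with $p\in\varpi^pA$. Condition (1) holds by definition. Degree $0$ of (2) is the isomorphism $A/\varpi\cong A/\varpi^p$ of \cite{BMS1}. For degree $n$ we must show that
\[
\varpi^nA/\varpi^{n+1}A\to\varpi^{pn}A/\varpi^{p(n+1)}A,\qquad x\mapsto x^p,
\]
is bijective.
\begin{itemize}
\item For surjectivity, write $\varpi^{pn}a$ with $a\equiv b^p$ modulo $\varpi^p$. Then $(\varpi^nb)^p=\varpi^{pn}b^p$ represents the given class.
\item For injectivity, suppose $x=\varpi^ny$ and $x^p=\varpi^{pn}y^p\in\varpi^{p(n+1)}A$. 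We must show $y^p\in\varpi^pA$; injectivity in degree $0$ then gives $y\in\varpi A$.
\end{itemize}
The step $y^p\in\varpi^pA$ requires controlling $\varpi$-torsion, namely that $\varpi^{pn}z\in\varpi^{p(n+1)}A$ implies $z\in\varpi^pA+A[\varpi^{pn}]$, together with the fact that torsion elements are killed appropriately. Here I would use that in a perfectoid ring $A[\varpi]=A[\varpi^\infty]$ and that this torsion maps isomorphically under the $p$-power map. This is the known computation $A[\sqrt{\varpi A}]=A[\varpi]$ from \cite{BMS1} (or the analogous statement in $A^\flat$, which is perfect, hence reduced-like on torsion).

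\emph{Sufficiency.} Assume (1) and (2). Degree $0$ gives that $\Phi\colon A/\varpi\to A/\varpi^p$ is an isomorphism, and in particular surjective. It remains to show that the kernel of $\theta\colon W(A^\flat)\to A$ is principal. By \cite[Lemma 3.10]{BMS1} it suffices to find $\varpi$ admitting compatible $p$-power roots up to unit and to show that $\Phi$ is injective. Injectivity is exactly degree $0$. Roots of $\varpi$ are obtained from surjectivity of $\Phi$ by lifting successively and using completeness.

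The higher degrees of (2) are needed to transfer the injectivity statement to the modified element $\varpi'$ with $p$-power roots, which differs from $\varpi$ only up to $\varpi^p$-adic corrections. Concretely, they show that the filtrations by $(\varpi)$ and $(\varpi')$ behave compatibly and that no new torsion appears.

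\emph{Main obstacle.} The main obstacle is the torsion analysis in the necessity direction: showing that the $p$-power map is injective on the associated graded in positive degrees. This is the part that builds on the $\varpi$-torsion structure of perfectoid rings. I would attack it by passing to the tilt, where Frobenius is bijective, and transporting back via $\sharp$.
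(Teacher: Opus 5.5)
\textbf{The ``if'' direction has a genuine gap.} You use only degree $0$ of (2), relying on the claim that (1) plus bijectivity of $A/\varpi A\to A/\varpi^pA$ (for $\varpi$ with compatible $p$-power roots) makes $A$ perfectoid. This is false when $A$ has $\varpi$-torsion.

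Let $P=\mathbb{F}_p[\varpi^{1/p^\infty},y^{1/p^\infty}]$, and let $A$ be the $\varpi$-adic completion of $P/(\varpi^py^p)$. Then:
\begin{itemize}
\item (1) holds, since $p=0$ and $A$ is a $\varpi$-adic completion.
\item $A/\varpi A=P/\varpi P$ and $A/\varpi^pA=P/\varpi^pP$, so the degree-$0$ map is bijective because $P$ is perfect.
\item Since $A/\varpi^2A=P/\varpi^2P$, we have $\varpi y\notin\varpi^2A$, while $(\varpi y)^p=0$.
\end{itemize}
So $A$ is not reduced, hence not perfectoid, and (2) fails in degree $1$. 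Your criterion needs an extra hypothesis such as $\varpi$-torsion-freeness. In that case the two conormal cones are polynomial rings over $A/\varpi A$ and $A/\varpi^pA$, and degree $0$ does control all degrees.

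Your stated role for the higher degrees also does not hold up. Any $\varpi'$ obtained from $\varpi$ by $\varpi^p$-adic corrections lies in $\varpi(1+\varpi^{p-1}A)=\varpi A^\times$ by completeness. So $(\varpi')=(\varpi)$, and neither (1) nor (2) changes.

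The higher degrees actually serve to control torsion:
\begin{itemize}
\item Injectivity in all degrees plus separatedness gives reducedness: if $x\in\varpi^nA\setminus\varpi^{n+1}A$ and $x^p=0$, the nonzero class of $x$ in $\gr^n_{(\varpi)}(A)$ maps to $0$.
\item Reducedness gives $\varpi A[\varpi^\infty]=0$, since $(\varpi x)^n=x^{n-1}\varpi^nx$.
\item Then, by \cref{lem:pair ex} and the diagram in the proof of \cref{lem:pair ext} (cf.\ \cref{rem:torgrisom}~(b)), degree $1$ of (2) amounts to bijectivity of $t\mapsto t^p$ on $A[\varpi^\infty]$.
\end{itemize}
You would still have to turn these torsion properties into principality of $\ker\theta$. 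That is the substantive step. The paper does not reprove it; it cites \cite[Theorem 16.3.64]{GR24}, noting that $A$ is a P-ring for the $\varpi$-adic topology.

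\textbf{The ``only if'' direction also needs repair.} Your target ``$y^p\in\varpi^pA$, hence $y\in\varpi A$'' is false for an arbitrary $y$ with $x=\varpi^ny$. Take the perfectoid ring $\mathcal{O}_C\times\mathbb{F}_p$, with $C$ the completion of $\overline{\mathbb{Q}}_p$, $\varpi=(p^{1/p},0)$ and $y=(0,1)$. Then $\varpi^ny=0$, but $y^p=y\notin\varpi^pA$. No control of torsion can fix this: once $\varpi A[\varpi^\infty]=0$, one has $A[\varpi^\infty]\cap\varpi A=0$.

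The torsion must instead be split off from $y$ itself:
\begin{itemize}
\item Write $y^p=\varpi^pz+w$ with $w\in A[\varpi^{pn}]$.
\item Choose $t\in A[\varpi^\infty]$ with $t^p=w$.
\item Since $p\in\varpi^pA$, we get $(y-t)^p\equiv y^p-t^p\equiv 0\pmod{\varpi^pA}$; this also holds for $p=2$.
\item Degree $0$ then gives $y-t\in\varpi A$.
\item Since $\varpi^nt=0$ for $n\geq 1$, we conclude $\varpi^ny\in\varpi^{n+1}A$.
\end{itemize}
With this, the two torsion facts you list (that $\varpi A[\varpi^\infty]=0$ and that $t\mapsto t^p$ is bijective on $A[\varpi^\infty]$) do yield the ``only if'' direction. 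However, they must be proved without using the present theorem, because the paper notes that Gabber--Ramero's torsion characterization can be deduced from it. Transporting the corresponding statements from the perfect ring $A^\flat$ along $\sharp$, as you suggest, is a reasonable way to do this.
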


For the proof, we refer to \cite[Theorem 16.3.64]{GR24}, which treats more general topological rings; to show the ``if'' part of \cref{thm:GR16.3.64}, note that $A$ is a P-ring in the sense of \cite[Definition 16.2.1]{GR24}, when endowed with the $\varpi$-adic topology.
Although $\gr_{(\varpi)}^\bullet(A)$ is nothing but the polynomial ring $(A/\varpi A)[T]$ when $A$ is $\varpi$-torsion free (i.e., $\varpi$ is a non-zero-divisor of $A$), it still captures the structure in the presence of $\varpi$-torsion and thus provides a unified treatment of both cases. For instance, \cref{thm:GR16.3.64} can be applied to deduce the reducedness of perfectoid rings (\cite[Corollary 16.3.63 (i)]{GR24}) and their stability under weakly \'{e}tale base change (\cite[Theorem 16.7.1]{GR24}). 

In this context, we study perfectoid towers via conormal cones rather than torsion parts, building on the methods of Gabber--Ramero. See \cref{def:perfectoidtower} for the precise definition of perfectoid towers. We first prove that conormal cones are invariant under tilting (\cref{prop:perfdtowergr}):
\[
\gr_{I_0^{s.\flat}}^\bullet(R^{s.\flat}) \xr{\cong} \gr_{I_0}^\bullet(R).
\]
As for perfectoid rings, the corresponding result was shown in \cite[Claim 16.3.62 (ii)]{GR24}:
\[
\gr_{(\varpi^\flat)}^\bullet(A^\flat)\xr{\cong}\gr_{(\varpi)}^\bullet(A).
\]
We further establish the following tower-theoretic analogue of \cref{thm:GR16.3.64}:

\begin{theoremA}[{\cref{thm:gGr}}]
\label{thm:A}
Let $(R,I_0)$ be the pair consisting of a commutative ring $R$ and a principal ideal $I_0\subset R$. Let $\textrm{{\boldmath $R$}}=(R_0\to R_1\to\cdots\to R_i\to\cdots)$ be a tower of rings. Then {\boldmath $R$} is a perfectiod tower arising from a pair $(R,I_0)$ if and only if $R_1$ contains a principal ideal $I_1$ satisfying the following conditions.
  \begin{enumerate}
  \setcounter{enumi}{-1}
  \item For every $i\geq 0$, the induced map $R_i/I_0R_i\to R_{i+1}/I_0R_{i+1}$ is injective, the absolute Frobenius of $R_{i+1}/I_0R_{i+1}$ factors as $R_{i+1}/I_0R_{i+1}\xr{F_i}R_i/I_0R_i \hookrightarrow R_{i+1}/I_0R_{i+1}$, and $I_0(R_i)_{\Iztor}=(0)$.
  \item $I_1^p=I_0R_1$, and $R_i$ is $I_0$-adically Zariskian for any $i\geq 0$.
  \item For every $i\geq 0$, there exists an isomorphism of graded rings
  \[
  \Phi_i^\bullet\colon \gr_{I_1}^\bullet(R_{i+1}) \xr{\cong} \gr_{I_0}^\bullet(R_i)
  \]
  such that the diagram of rings
  \[
  \begin{tikzcd}
  R_{i+1}/I_0R_{i+1} \rar["F_i"] \dar & R_i/I_0R_i \\
  R_{i+1}/I_1R_{i+1} \ar[ru,"\Phi_i^0"',"\cong" sloped]
  \end{tikzcd}
  \]
  commutes, where the vertical arrow is the canonical projection.
  \end{enumerate}
\end{theoremA}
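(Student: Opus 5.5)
The plan is to compare both conditions directly with the definition of a perfectoid tower (\cref{def:perfectoidtower}). Condition (0), the Zariskian part of (1), and $I_1^p=I_0R_1$ already appear verbatim in that definition. The content of the theorem is therefore the equivalence, given these common hypotheses, between
\begin{itemize}
\item the remaining axioms of a perfectoid tower, namely that $F_i$ induces an isomorphism $R_{i+1}/I_1R_{i+1}\cong R_i/I_0R_i$ and that $F_i$ restricts to a bijection $(R_{i+1})_{\textrm{$I_1$-}\mathrm{tor}}\cong (R_i)_{\Iztor}$ compatible with the quotient maps, and
\item the existence of graded isomorphisms $\Phi_i^\bullet$ compatible with $F_i$ in degree $0$.
\end{itemize}
Write $I_0=(f)$ and $I_1=(g)$, so that $g^p=uf$ for a unit $u$ of $R_1$; after rescaling we may assume $g^p=f$.

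The first step is to describe each graded piece explicitly. Multiplication by $f^n$ gives a surjection $R_i/I_0R_i\to \gr^n_{I_0}(R_i)$ whose kernel is $(I_0R_i+\mathrm{Ann}(f^n))/I_0R_i$. Suppose $I_0\cdot(R_i)_{\Iztor}=0$. Then $\mathrm{Ann}(f^n)=\mathrm{Ann}(f)=(R_i)_{\Iztor}$ for $n\geq1$. Moreover $(R_i)_{\Iztor}\cap I_0R_i=0$: if $fy$ is torsion then $y$ is torsion, so $fy=0$. Hence for $n\geq 1$,
\[
\gr^n_{I_0}(R_i)\cong R_i/\bigl(I_0R_i+(R_i)_{\Iztor}\bigr),
\]
and the torsion embeds into $R_i/I_0R_i$. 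This gives $\gr^\bullet_{I_0}(R_i)\cong (R_i/I_0R_i)[T]/\bigl(T\cdot \overline{(R_i)_{\Iztor}}\bigr)$. The same computation for $g$ on $R_{i+1}$ gives
\[
\gr^n_{I_1}(R_{i+1})\cong R_{i+1}/\bigl(gR_{i+1}+\mathrm{Ann}(g^n)\bigr).
\]

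For the ``only if'' direction, define $\Phi_i^\bullet$ as $F_i$ in degree $0$ and by sending $T\mapsto T$. Concretely, $\overline{g^n x}\mapsto \overline{f^n x'}$ where $x'$ lifts $F_i(\bar x)$, which morally is $(g^nx)^p$ divided appropriately. For this to be well defined and bijective, I need that $F_i$ carries $(\mathrm{Ann}(g^n)+gR_{i+1})/gR_{i+1}$ onto $\overline{(R_i)_{\Iztor}}$ for every $n\geq1$.

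This is where the torsion axiom enters. The $I_1$-torsion equals the $I_0$-torsion, since the two ideals have the same radical. Also, $I_0$ kills the torsion of $R_{i+1}$. From this I want to show $\mathrm{Ann}(g^n)+gR_{i+1}=(R_{i+1})_{\textrm{$I_1$-}\mathrm{tor}}+gR_{i+1}$ for all $n\geq1$. The inclusion $\subseteq$ is clear. For $\supseteq$, take a torsion element $x$. The torsion bijection shows $\bar x$ is determined by a torsion element of $R_i$ killed by $f$; pulling back along $F_i$ should show that $x$ agrees modulo $g$ with an element killed by $g$. Multiplicativity of $\Phi_i^\bullet$ then follows because both sides are quotients of polynomial rings and $F_i$ is a ring map.

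For the ``if'' direction, read off the same identification backwards. Compatibility in degree $0$ gives the isomorphism $R_{i+1}/I_1\cong R_i/I_0$. Comparing kernels in degree $n$, the degree-$n$ part of the isomorphism is a module isomorphism over $\Phi_i^0$ between cyclic modules generated by the classes of $g^n$ and $f^n$ (up to a unit). Therefore $F_i$ maps $\overline{\mathrm{Ann}(g^n)}$ onto $\overline{(R_i)_{\Iztor}}$ for all $n$. Letting $n\to\infty$ recovers the image of the full $I_1$-torsion.

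The main obstacle is showing that the $I_1$-torsion of $R_{i+1}$ injects into $R_{i+1}/I_1R_{i+1}$, and that it is killed by $g$ (not merely by $f=g^p$). Only then is the correspondence between kernels a genuine bijection of torsion parts, as the definition of a perfectoid tower requires. I would attack this with a Frobenius trick. If $x$ is torsion and $x=gy$, then $y$ is torsion. Then $x^p=fy^p$, so $x^p=0$ because $f$ kills torsion; applying injectivity of $F_i$ on the relevant degree-$0$ and degree-$1$ pieces should then force $\bar x=0$. If this fails, I would instead use the Zariskian hypothesis together with a lifting argument of Gabber--Ramero type to control the torsion.
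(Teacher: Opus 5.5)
Your reduction to comparing \textbf{(g)} with the graded condition is correct, as is the computation $\gr^n_{I_0}(R_i)\cong R_i/(I_0R_i+(R_i)_{\Iztor})$ for $n\geq 1$. Comparing kernels of multiplication by the $n$-th power of a generator is the same mechanism as the paper's diagram \eqref{eq:torgr}. That kernel is just $\mathrm{Ann}_{R/I}(\gr^n_I)$, so you do not even need the generators to correspond up to a unit; the paper arranges that correspondence via \textbf{(e)} and \cref{rem:Zar*}. Your normalization $g^p=f$ is not justified in general, but it is harmless: you only use $g^p\in fR_{i+1}$ and $f\in g^pR_{i+1}$.

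The genuine gap is exactly the step you flag. You never prove that $I_1$ annihilates $T\coloneqq(R_{i+1})_{\Iztor}$, or equivalently that $T\cap I_1R_{i+1}=0$. This is needed in both directions:
\begin{itemize}
\item in ``only if'', to get $\mathrm{Ann}(g^n)+gR_{i+1}=T+gR_{i+1}$; ``pulling back along $F_i$ should show\ldots'' is not an argument;
\item in ``if'', to upgrade the bijection that $\Phi_i^0$ induces from the image of $T$ in $R_{i+1}/I_1R_{i+1}$ onto $\varphi_{I_0,R_i}((R_i)_{\Iztor})$ into the bijection $T\to(R_i)_{\Iztor}$ demanded by \textbf{(g)}.
\end{itemize}
In the paper this enters as the hypothesis $IA_{\Itor}=(0)$ of \cref{lem:pair ext} for $(A,I)=(R_{i+1},I_1R_{i+1})$.

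Your Frobenius trick does not supply it. From $x=gy\in T$ you get $x^p=0$, hence $\ol{t_i}(F_i(\bar x))=0$, hence $x\in I_1R_{i+1}$, which was the assumption. Iterating the degree-one comparison only yields $x\in g^kR_{i+1}$ for all $k$, which proves nothing without separatedness.

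What closes the gap is to keep the cofactors inside $T$ and use $I_0T=0$ (condition (0) at level $i+1$), so the descent stops after $p$ steps.

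For ``if'': your kernel comparison shows that $\Phi_i^0$ maps the image of $\mathrm{Ann}(g^n)$ in $R_{i+1}/gR_{i+1}$ onto the image of $(R_i)_{\Iztor}$ for every $n\geq 1$. Since $\Phi_i^0$ is injective, these images are independent of $n$. As $T=\bigcup_n\mathrm{Ann}(g^n)$, this gives $T\subset\mathrm{Ann}(g)+gR_{i+1}$. Now suppose $w=gz_1\in T$; then $z_1\in T$. Write $z_1=y_1+gz_2$ with $gy_1=0$; then $z_2\in T$ and $w=g^2z_2$. After $p$ steps, $w=g^pz_p\in I_0T=0$.

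For ``only if'' it is quicker. If $x\in T\cap gR_{i+1}$, its class in $R_{i+1}/I_0R_{i+1}$ lies in $\Ker F_i$ by \textbf{(f-2)}. Hence $\varphi_{I_0,R_i}((F_i)_{\mathrm{tor}}(x))=0$. Since $\varphi_{I_0,R_i}$ is injective (from $I_0(R_i)_{\Iztor}=0$) and $(F_i)_{\mathrm{tor}}$ is bijective, $x=0$.

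In either case $gT\subset T\cap gR_{i+1}=0$, and the rest of your argument then goes through.
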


Using the theorem, we prove the reducedness of perfectoid towers (\cref{cor:reduced}) and their stability under weakly \'{e}tale base change (\cref{thm:bc}). Moreover, we deduce the tilting invariance of Krull dimension (\cref{cor:Krulldim}). Although these three results were already established in \cite{Ha26a}, the proof there rely on a reduction to the $I_0$-torsion free case, whereas our approach treats the general case directly by working with conormal cones rather than $I_0$-torsion parts.
We also include an appendix discussing the universal property of Zariskizations of pairs.

\addtocontents{toc}{\protect\setcounter{tocdepth}{-2}} 

\subsection*{Acknowledgements} 
The author would like to thank Shinnosuke Ishiro, Ryo Ishizuka, Kei Nakazato, Kazuma Shimomoto, Tatsuki Yamaguchi for their continuous support.

\subsection*{Notations and conventions}
  \begin{itemize}
  \item We consistently fix a prime number $p>0$.
  \item All rings are assumed to be commutative and unital (unless otherwise stated).
  \item For an $\F_p$-algebra $R$, let $\varphi=\varphi_R\colon R\to R$ denote the absolute Frobenius.
  \item For a ring $A$ and an ideal $I\subset A$, when we say an $A$-module $M$ is $I$-adically complete, we always mean that $M$ is Hausdorff complete with respect to the $I$-adic topology.
  \end{itemize}

\addtocontents{toc}{\protect\setcounter{tocdepth}{2}} 
\section{Torsion parts and conormal cones}

In this section, we discuss the relationship between torsion parts and conormal cones.

Let us first recall some basic notions and terminology.
By a \emph{pair} we simply mean a couple $(A,I)$ consisting of a ring $A$ and an ideal $I$ of $A$. When the ideal $I$ is principal, say $I=(a)$, then we often write $(A,a)$ in place of $(A,I)$. For an $A$-module $M$, we set
\[
M[I]\coloneqq 0:_MI=\{x\in M\mid Ix=0\}.
\]
We say that an element $x\in M$ is \emph{$I$-torsion} if for all $a\in I$ there exists an integer $n>0$ such that $a^nx=0$. Let $M_{\textrm{$I$-}\mathrm{tor}}$ denote the $A$-submodule of $M$ consisting of all $I$-torsion elements in $M$. We say that $M$ is \emph{$I$-torsion free} if $M_{\textrm{$I$-}\mathrm{tor}}=0$. Note that we follow the terminologies in \cite{FKI}. Let
\[
\varphi_{I,M}\colon M_{\Itor}\to M/M_{\Itor}
\]
denote the composition of the inclusion $M_{\Itor}\hookrightarrow M$ followed by the canonical projection $M\twoheadrightarrow M/M_{\Itor}$. Note that if $I$ is finitely generated, then $M_{\Itor}=\bigcup_{n>0}M[I^n]$.

For a pair $(A,I)$, we consider the associated \emph{conormal cone}:
\[
\gr_I^\bullet(A)=\bigoplus_{n\geq 0}I^n/I^{n+1}.
\]
If $M$ is an $A$-module (resp.\ $A$-algebra), then
\[
\gr_I^\bullet(M)=\bigoplus_{n\geq 0}I^nM/I^{n+1}M
\]
is a graded $\gr_I^\bullet(A)$-module (resp.\ $\gr_I^\bullet(A)$-algebra).

In this paper, we consider mainly the case where $I$ is a principal ideal.
The following two lemmas are useful for our later argument.

\begin{lemma}
\label{lem:grn}
Let $(A,I)$ be a pair such that $I$ is a principal ideal. Let $M$ be an $A$-module, and $n\geq 1$ an integer. Then the following conditions are equivalent.
  \begin{enumerate}
  \item For any generator $a\in A$ of $I$, the surjective $A/I$-linear map
  \begin{equation}
  \label{eq:mult}
  \gr_I^n(M) \to \gr_I^{n+1}(M)
  \end{equation}
  defined by the multiplication by $a\ \mathrm{mod}\ I^2M$ is an isomorphism.
  \item There exists a generator $a\in A$ of $I$ such that the map \eqref{eq:mult} is an isomorphism.
  \item $M[I^{n+1}]\subset M[I^n]+IM$.
  \end{enumerate}
\end{lemma}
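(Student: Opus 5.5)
The plan is to prove (1)$\Rightarrow$(2)$\Rightarrow$(3)$\Rightarrow$(1). The implication (1)$\Rightarrow$(2) is trivial, since $I$ is principal and so has a generator. The real content is to identify the kernel of \eqref{eq:mult} for a fixed generator $a$ of $I$, and to see that this kernel does not depend on the choice of $a$.

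Fix a generator $a$ of $I$. Every element of $\gr_I^n(M)=a^nM/a^{n+1}M$ has the form $a^nx \bmod a^{n+1}M$ with $x\in M$, and the map \eqref{eq:mult} sends it to $a^{n+1}x \bmod a^{n+2}M$. Surjectivity is clear. The class $a^nx$ lies in the kernel if and only if $a^{n+1}x=a^{n+2}y$ for some $y\in M$. Setting $z=x-ay$, this says $z\in M[I^{n+1}]$ and $x\in z+IM$. Hence
\[
\ker\eqref{eq:mult}=\{a^nz \bmod a^{n+1}M \mid z\in M[I^{n+1}]\}.
\]
So injectivity means exactly that $a^nz\in a^{n+1}M$ for every $z\in M[I^{n+1}]$.

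For (2)$\Rightarrow$(3), take $z\in M[I^{n+1}]$. By injectivity there is $w\in M$ with $a^nz=a^{n+1}w$. Then $z-aw\in M[a^n]=M[I^n]$, so $z\in M[I^n]+IM$.

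For (3)$\Rightarrow$(1), let $a$ be any generator of $I$ and take $z\in M[I^{n+1}]$. By (3) we can write $z=u+bm$ with $u\in M[I^n]$, $b\in I$ and $m\in M$. Then $a^nz=a^nbm\in a^{n+1}M$, because $b\in aA$. So the kernel vanishes and \eqref{eq:mult} is an isomorphism.

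The only step needing care is the kernel computation, in particular checking that $x$ is replaced by the torsion element $z$ modulo $IM$ without changing the class $a^nx \bmod a^{n+1}M$. This holds because $a^n(x-z)=a^{n+1}y\in a^{n+1}M$. Condition (3) is intrinsic in $I$, so it also shows that the choice of generator in (1) and (2) is irrelevant.
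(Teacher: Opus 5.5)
Your proof is correct and essentially the same as the paper's. The paper's (2)$\Rightarrow$(3) is your argument with $z-aw\in M[I^n]$, and its (3)$\Rightarrow$(1) does your kernel computation inline: it writes $a^{n+1}x=a^{n+2}y$, notes $x-ay\in M[I^{n+1}]\subset M[I^n]+IM$, and concludes $a^nx\in I^{n+1}M$. Isolating the kernel description $\{a^nz \bmod a^{n+1}M \mid z\in M[I^{n+1}]\}$ beforehand is only a reorganization of the same steps.
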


Note that condition (3) is automatic if $M_{\Itor}=M[I]$, or equivalently, if $IM_{\Itor}=0$.

\begin{proof}
(1) $\Rightarrow$ (2): Trivial.

(2) $\Rightarrow$ (3): Pick $x\in M[I^{n+1}]$. Then $a(a^nx)=a^{n+1}x=0\in I^{n+2}M$. Hence $a^nx \in I^{n+1}M$ by assumption. Hence $a^nx=a^{n+1}y$ for some $y\in M$. Then $a^n(x-ay)=0$, and so $x=(x-ay)+ay \in M[I^n]+IM$.

(3) $\Rightarrow$ (1): Let $x\in M$ be such that $a^{n+1}x\in I^{n+2}M$. Then $a^{n+1}x=a^{n+2}y$ for some $y\in M$. Since $a^{n+1}(x-ay)=0$, we have $x-ay\in M[I^{n+1}]\subset M[I^n]+IM$. Hence $a^nx\in a^nIM=I^{n+1}M$, as desired.
\end{proof}

\begin{lemma}
\label{lem:pair ex}
Let $(A,a)$ be a pair with $a\in A$, and $M$ an $A$-module.
  \begin{enumerate}
  \item The canonical sequence of $A/aA$-modules
  \[
  M[a] \to M/aM \xr{a\ \mathrm{mod}\ a^2M} aM/a^2M \to 0
  \]
is exact.
  \item If $aM_{\ator}=0$, then the canonical sequence of $A/aA$-modules
  \[
  0 \to M_{\ator} \xr{\varphi_{(a),M}} M/aM \xr{a\ \mathrm{mod}\ a^2M} aM/a^2M \to 0
  \]
  is exact.
  \end{enumerate}
\end{lemma}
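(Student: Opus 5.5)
For part (1), I would check exactness at the two spots directly. Surjectivity of the map $M/aM\to aM/a^2M$, $x \bmod aM\mapsto ax \bmod a^2M$, is immediate since every element of $aM$ has the form $ax$. Well-definedness is also clear, as $a\cdot aM\subset a^2M$. The first arrow is the composite $M[a]\hookrightarrow M\twoheadrightarrow M/aM$, and its image is killed by multiplication by $a$, so the sequence is a complex. For exactness in the middle, take $x\in M$ with $ax\in a^2M$, say $ax=a^2y$. Then $a(x-ay)=0$, so $x-ay\in M[a]$. Since $x\equiv x-ay \bmod aM$, the class of $x$ lies in the image of $M[a]$. All maps are $A$-linear and their targets are killed by $a$, so they are $A/aA$-linear; for $M[a]$ this holds because $aM[a]=0$.

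For part (2), assume $aM_{\ator}=0$. Every $a$-torsion element is then killed by $a$, so $M_{\ator}=M[a]$. Exactness at $M/aM$ and at $aM/a^2M$ therefore follows from part (1), with $\varphi_{(a),M}$ identified with the map $M[a]\to M/aM$. The only new claim is injectivity of $M_{\ator}\to M/aM$.

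Suppose $x\in M_{\ator}$ and $x=ay$ for some $y\in M$. Then $a^2y=ax=0$, since $aM_{\ator}=0$. Hence $y\in M[a^2]\subset M_{\ator}$, and so $x=ay\in aM_{\ator}=0$. This gives injectivity.

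There is essentially no obstacle. The one point needing care is that injectivity uses the hypothesis twice: once to get $a^2y=0$, and once to conclude $ay=0$ from $y$ being $a$-torsion. Note also that, by the description $M_{\ator}=\bigcup_n M[a^n]$ recalled earlier for finitely generated $I$, the hypothesis forces $M_{\ator}$ to equal $M[a]$, which justifies the identification made above.
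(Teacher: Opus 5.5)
Your proof is correct and follows the paper's argument: part (1) matches the paper's proof step for step, and in part (2) the paper likewise uses $M[a]=M_{\ator}$ to reduce to the injectivity of $\varphi_{(a),M}$. The one difference is that the paper cites \cite[Corollary 3.15]{INS25} for this injectivity, while you prove it directly (if $x=ay$ then $a^2y=0$, so $y\in M_{\ator}$ and $x\in aM_{\ator}=0$), which is the content of that citation.
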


\begin{proof}
(1) If $x\in M$ satisfies $ax\in a^2M$, we have $ax=a^2y$ for some $y\in M$. Then $a(x-ay)=0$, that is, $x-ay\in M[a]$. Hence $x\ \mathrm{mod}\ aM$ lies in the image of $M[a]$.

(2) Since $M[a]=M_{\ator}$ by assumption, we only have to show that $\varphi_{(a),M}$ is injective, which follows from the assumption $aM_{\ator}=0$ again (\cite[Corollary 3.15]{INS25}).
\end{proof}

From these lemmas we deduce further results, which will be used in the next section.

\begin{lemma}
\label{lem:pair ext}
Let $(A,I)$ and $(B,J)$ be pairs such that $I$ and $J$ are principal and $IA_{\Itor}=(0)$ and $JB_{\Jtor}=(0)$. Suppose that there exists a ring homomorphism $\Phi\colon A/I\hookrightarrow B/J$. Consider the following conditions.
  \begin{enumerate}
  \item There exists a homomorphism of (possibly) non-unital rings $\Phi_{\mathrm{tor}}\colon A_{\Itor}\to B_{\Jtor}$ such that $\Phi\circ\varphi_{I,A}=\varphi_{J,B}\circ\Phi_{\mathrm{tor}}$.
  \item $\Phi$ extends to a morphism of graded rings $\Phi^\bullet\colon\gr_I^\bullet(A)\to\gr_J^\bullet(B)$.
  \end{enumerate}
Then we have the implication ``\emph{(1)} $\Rightarrow$ \emph{(2)}.'' If we assume, moreover, that
  \begin{itemize}
  \item[\emph{($\ast$)}] there exist generators $a$ and $b$ of $I$ and $J$, respectively, such that $\Phi^1(a\ \mathrm{mod}\ I^2)=b\ \mathrm{mod}\ J^2$,
  \end{itemize}
then it also holds the implication ``\emph{(2)} $\Rightarrow$ \emph{(1)}.''
\end{lemma}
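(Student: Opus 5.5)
The plan is to make the hypothesis $IA_{\Itor}=(0)$ (and likewise for $(B,J)$) yield an explicit presentation of the conormal cone:
\[
\gr_I^\bullet(A)\cong (A/I)[T]\big/\bigl(\varphi_{I,A}(A_{\Itor})\cdot T\,(A/I)[T]\bigr),
\]
where $T$ corresponds to $a \bmod I^2$ for a chosen generator $a$ of $I$. Both implications then reduce to bookkeeping with this presentation.

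\emph{Step 1 (the presentation).} Fix a generator $a$ of $I$. Since $IA_{\Itor}=0$, the remark after \cref{lem:grn} shows that condition (3) of \cref{lem:grn} holds for every $n\geq 1$. Hence multiplication by $a \bmod I^2$ gives isomorphisms $\gr_I^n(A)\xr{\cong}\gr_I^{n+1}(A)$ for all $n\geq 1$.

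The graded surjection $(A/I)[T]\to\gr_I^\bullet(A)$, $T\mapsto a\bmod I^2$, is therefore described as follows.
\begin{itemize}
\item In degree $0$ it is the identity.
\item In degree $n\geq 1$ it is the map $A/I\to I^n/I^{n+1}$, $x\mapsto a^nx$. Composing with the isomorphisms above, this is the degree-$1$ map $A/I\to aA/a^2A$ followed by an isomorphism.
\end{itemize}
So the kernel in degree $n\geq 1$ equals the kernel in degree $1$. By \cref{lem:pair ex}~(2), that kernel is exactly $\varphi_{I,A}(A_{\Itor})$, and $\varphi_{I,A}$ is injective. This gives the displayed presentation. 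The same holds for $(B,J,b)$. The step I expect to need the most care is this identification of the kernel in all degrees; it rests entirely on combining \cref{lem:grn} with \cref{lem:pair ex}.

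\emph{Step 2: (1) $\Rightarrow$ (2).} Define $(A/I)[T]\to(B/J)[T]$ by applying $\Phi$ to coefficients and sending $T\mapsto T$, where the target $T$ stands for $b\bmod J^2$ with $b$ any generator of $J$. For $x\in A_{\Itor}$, condition (1) gives
\[
\Phi(\varphi_{I,A}(x))=\varphi_{J,B}(\Phi_{\mathrm{tor}}(x))\in\varphi_{J,B}(B_{\Jtor}).
\]
Hence the relation ideal $\varphi_{I,A}(A_{\Itor})\,T$ is carried into $\varphi_{J,B}(B_{\Jtor})\,T$. The map therefore descends, via Step 1, to a graded ring map $\Phi^\bullet$ extending $\Phi$ in degree $0$.

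\emph{Step 3: (2) $\Rightarrow$ (1), assuming $(\ast)$.} Let $x\in A_{\Itor}$, so $ax=0$. Then $\varphi_{I,A}(x)\cdot(a\bmod I^2)=0$ in $\gr_I^1(A)$. Applying $\Phi^\bullet$ and using $(\ast)$ gives
\[
\Phi(\varphi_{I,A}(x))\cdot(b\bmod J^2)=0 \quad\text{in } \gr^1_J(B).
\]
By \cref{lem:pair ex}~(2) for $(B,b)$, we get $\Phi(\varphi_{I,A}(x))=\varphi_{J,B}(y)$ for a unique $y\in B_{\Jtor}$, uniqueness coming from injectivity of $\varphi_{J,B}$. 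Set $\Phi_{\mathrm{tor}}(x)\coloneqq y$.

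Since $\varphi_{I,A}$, $\varphi_{J,B}$ and $\Phi$ are additive and multiplicative, injectivity of $\varphi_{J,B}$ shows that $\Phi_{\mathrm{tor}}$ is additive and multiplicative, i.e.\ a non-unital ring homomorphism. The required compatibility $\Phi\circ\varphi_{I,A}=\varphi_{J,B}\circ\Phi_{\mathrm{tor}}$ holds by construction.
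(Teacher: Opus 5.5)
Your proof is correct and follows essentially the same route as the paper. Both arguments use the exact sequence of \cref{lem:pair ex}~(2) to identify $\varphi_{I,A}(A_{\Itor})$ as the kernel of multiplication by $a$ in degree $1$, use \cref{lem:grn} to carry this to every degree, and get $\Phi_{\mathrm{tor}}$ in the converse from the universal property of kernels. Your explicit presentation $\gr_I^\bullet(A)\cong(A/I)[T]/(\varphi_{I,A}(A_{\Itor})T)$ spells out the paper's terse step ``deduce from \cref{lem:grn} the assertion,'' and has the side benefit of making $\Phi^\bullet$ visibly a graded ring homomorphism in all degrees.
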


\begin{proof}
(1) $\Rightarrow$ (2): Choose a generator $a$ (resp.\ $b$) of $I$ (resp.\ $J$). By \cref{lem:pair ex}, we have the commutative diagram of abelian groups with exact rows
\begin{equation}
\label{eq:torgr}
\begin{tikzcd}
0 \rar & A_{\Itor} \rar["\varphi_{I,A}"] \dar["\Phi_{\mathrm{tor}}"'] & A/I \rar["a\ \mathrm{mod}\ I^2"] \dar["\Phi"] &[1cm] I/I^2 \rar \dar["\Phi^1"] & 0 \\
0 \rar & B_{\Jtor} \rar["\varphi_{J,B}"'] & B/J \rar["b\ \mathrm{mod}\ J^2"'] & J/J^2 \rar & 0,
\end{tikzcd}
\end{equation}
where $\Phi^1$ is the map induced by the universality of cokernels. Then we deduce from \cref{lem:grn} the assertion.

(2)+($\ast$) $\Rightarrow$ (1): Let $a$ and $b$ as in ($\ast$). Since $\Phi$ is a homomorphism of graded rings, we get the commutative diagram \eqref{eq:torgr} of abelian groups with exact rows, where $\Phi_{\mathrm{tor}}$ is the map induced by the universality of kernels, and it is the desired one.
\end{proof}

\begin{remark}
\label{rem:torgrisom}
One can deduce the following assertions from the proof of \cref{lem:pair ext}.
  \begin{enumalph}
  \item If $\Phi$ is injective and (1) is satisfied with the additional condition that the diagram of (possibly non-unital) rings
  \[
  \begin{tikzcd}
  A_{\Itor} \rar["\varphi_{I,A}"] \dar["\Phi_{\mathrm{tor}}"'] & A/I \dar["\Phi"] \\
  B_{\Jtor} \rar["\varphi_{J,B}"'] & B/J
  \end{tikzcd}
  \]
  is cartesian, then we can take $\Phi^\bullet$ to be injective.
  \item Assume that $\Phi$ is an isomorphism and the condition ($\ast$) is satisfied. Then the following conditions are equivalent.
  \begin{enumerate}
  \item There exists an \emph{isomomorphism} of (possibly) non-unital rings $\Phi_{\mathrm{tor}}\colon A_{\Itor}\to B_{\Jtor}$ such that $\Phi\circ\varphi_{I,A}=\varphi_{J,B}\circ\Phi_{\mathrm{tor}}$.
  \item $\Phi$ extends to an \emph{isomorphism} of graded rings $\Phi^\bullet\colon\gr_I^\bullet(A)\to\gr_J^\bullet(B)$.
  \end{enumerate}
  \end{enumalph}
\end{remark}

\begin{remark}
\label{rem:Zar*}
The condition ($\ast$) in \cref{lem:pair ext} is satisfied if $B$ is $J$-adically Zariskian (\cref{def:Zariskian}). Indeed, choose a generator $a$ of $I$, and take a lift $b\in J$ of $\Phi^1(a\ \mathrm{mod}\ I^2)\in J/J^2$. Then the $B/J$-module $J/J^2$ is generated by $b\ \mathrm{mod}\ J^2$, and hence by Nakayama's lemma, the ideal $J\subset B$ is generated by $b$.
\end{remark} 

\section{Applications to perfectoid towers}

In this section, we apply some of results obtained in the previous section to perfectoid towers. 
By a \emph{tower of rings} we simply mean an inductive system of rings $\textrm{{\boldmath $R$}}=\{R_i\}_{i\geq 0}=\{R_i,t_i\}_{i\geq 0}$ indexed by $\N$, and a \emph{morphism of towers} is a morphism of inductive systems. Recall the definition of perfectoid towers.

\begin{definition}[{\cite[Definitions 3.4 and 3.21]{INS25}, \cite[Definition 2.3]{Ha26a}}]
\label{def:perfectoidtower}
Let $(R,I_0)$ be a pair, and $\textrm{{\boldmath $R$}}=\{R_i,t_i\}_{i\geq 0}$ a tower of rings. Consider the following conditions.  \begin{itemize}
  \item[\textbf{(a)}] $R_0=R$ and $p\in I_0$.
  \item[\textbf{(b)}] For every $i\geq 0$, the ring homomorphism $\ol{t_i}\colon R_i/I_0R_i \to R_{i+1}/I_0R_{i+1}$ induced by $t_i$ is injective.
  \item[\textbf{(c)}] For every $i\geq 0$, we have $\Im(\varphi_{R_{i+1}/I_0R_{i+1}}) \subset \Im(\ol{t_i})$.
  \end{itemize}
If conditions \textbf{(a)} $\sim$ \textbf{(c)} are satisifed, then for any $i\geq 0$ there exists a unique ring homomorphism $F_i\colon R_{i+1}/I_0R_{i+1}\to R_i/I_0R_i$ such that the diagram
\[
\begin{tikzcd}
R_{i+1}/I_0R_{i+1} \rar["\varphi"] \ar[rd,"F_i"'] & R_{i+1}/I_0R_{i+1} \\
 & R_i/I_0R_i \uar["\ol{t_i}"',hookrightarrow]
\end{tikzcd}
\]
commutes. We call $F_i$ the \emph{$i$-th Frobenius projection} (of {\boldmath $R$} associated to $(R,I_0)$).
  \begin{itemize}
  \item[\textbf{(d)}] For every $i\geq 0$, the $i$-th Frobenius projection $F_i\colon R_{i+1}/I_0R_{i+1}\to R_i/I_0R_i$ is surjective.
  \item[\textbf{(e)}] For every $i\geq 0$, $R_i$ is $I_0R_i$-adically Zariskian (\cref{def:Zariskian}).
  \item[\textbf{(f)}] $I_0$ is a principal ideal, and $R_1$ contains a principal ideal $I_1$ satisfying the following conditions.
    \begin{itemize}
    \item[\textbf{(f-1)}] $I_1^p=I_0R_1$.
    \item[\textbf{(f-2)}] For every $i\geq 0$, $\Ker(F_i)=I_1(R_{i+1}/I_0R_{i+1})$.
    \end{itemize}  
  \item[\textbf{(g)}] For every $i\geq 0$, $I_0(R_i)_{\textrm{$I_0$-}\mathrm{tor}}=(0)$. Moreover, there exists a bijection $(F_i)_{\mathrm{tor}}\colon (R_{i+1})_{\textrm{$I_0$-}\mathrm{tor}}\to (R_i)_{\textrm{$I_0$-}\mathrm{tor}}$ such that the diagram
  \begin{equation*}
  \begin{tikzcd}[column sep=large]
  (R_{i+1})_{\textrm{$I_0$-}\mathrm{tor}} \rar["\varphi_{I_0,R_{i+1}}"] \dar["(F_i)_{\mathrm{tor}}"'] & R_{i+1}/I_0R_{i+1} \dar["F_i"] \\
  (R_i)_{\textrm{$I_0$-}\mathrm{tor}} \rar["\varphi_{I_0,R_i}"'] & R_i/I_0R_i
  \end{tikzcd}
  \end{equation*}
  commutes.
  \end{itemize}
Then we say that {\boldmath $R$} is
  \begin{enumerate}
  \item a \emph{purely inseparable tower arising from $(R,I_0)$} if it satisfies \textbf{(a)} $\sim$ \textbf{(c)}.
  \item a \emph{preperfectoid tower arising from $(R,I_0)$} if it satisfies \textbf{(a)} $\sim$ \textbf{(d)}, \textbf{(f)} and \textbf{(g)}.
  \item a \emph{perfectoid tower arising from $(R,I_0)$} if it satisfies \textbf{(a)} $\sim$ \textbf{(g)}.
  \end{enumerate}
\end{definition}

Given a preperfectoid tower $\textrm{{\boldmath $R$}}=\{R_i,t_i\}_{i\geq 0}$ arising from $(R,I_0)$, we have the associated tower $\textrm{{\boldmath $R$}}^\flat=\{R_i^{s.\flat},t_i^{s.\flat}\}_{i\geq 0}$ called the \emph{tilt} of {\boldmath $R$}. This is defined as follows:
  \begin{itemize}
  \item For any $i\geq 0$, let
  \[
  R_i^{s.\flat}\stackrel{\mathrm{def}}{=} \varprojlim(\cdots \xr{F_{i+2}} R_{i+2}/I_0R_{i+2} \xr{F_{i+1}} R_{i+1}/I_0R_{i+1} \xr{F_i} R_i/I_0R_i).
  \]
  We call $R_i^{s.\flat}$ the \emph{$i$-th small tilt} (of {\boldmath $R$} associated to $(R,I_0)$).
  \item For any $i\geq 0$, let $t_i^{s.\flat}\colon R_i^{s.\flat}\to R_{i+1}^{s.\flat}$ be the unique ring homomorphism such that the diagram of rings
  \[
  \begin{tikzcd}
  R_i^{s.\flat} \rar["t_i^{s.\flat}"] \dar & R_{i+1}^{s.\flat} \dar \\
  R_{i+m}/I_0R_{i+m} \rar["\ol{t_{i+m}}"] & R_{i+m+1}/I_0R_{i+m+1}
  \end{tikzcd}
  \]
  commutes for all $m\geq 0$, where the vertical arrows are the $m$-th projections.
  \end{itemize}
Moreover, let $I_0^{s.\flat}$ denote the kernel of the $0$-th projection $R_0^{s.\flat}\to R_0/I_0$.
By \cite[Lemma 3.39]{INS25}, for any $i\geq 0$ the $0$-th projection $R_i^{s.\flat}\to R_i/I_0R_i$ induces an isomorphism
\begin{equation}
\label{eq:isomres}
R_i^{s.\flat}/I_0^{s.\flat}R_i^{s.\flat}\xr{\cong} R_i/I_0R_i,
\end{equation}
which the reader should always keep in mind. Note also that $\textrm{{\boldmath $R$}}^\flat=\{R_i^{s.\flat},t_i^{s.\flat}\}_{i\geq 0}$ is a perfectoid tower arising from $(R^{s.\flat},I_0^{s.\flat})$ by \cite[Proposition 3.41]{INS25}.

The first application of \cref{lem:pair ext} is the following.

\begin{proposition}
\label{prop:grinj}
Let $\textrm{{\boldmath $R$}}=\{R_i,t_i\}_{i\geq 0}$ be a preperfectoid tower arising from a pair $(R,I_0)$. Then for any $i\geq 0$ the following hold.
  \begin{enumerate}
  \item The homomorphism of graded rings
  \[
  \gr_{I_0}^\bullet(t_i)\colon\gr_{I_0}^\bullet(R_i) \to \gr_{I_0}^\bullet(R_{i+1})
  \]
  induced by $t_i$ is injective.
  \item The injection $R_{i+1}/I_1R_{i+1} \hookrightarrow R_{i+1}/I_0R_{i+1}$ induced by the absolute Frobenius of $R_{i+1}/I_0R_{i+1}$ \emph{(}cf. condition \emph{\textbf{(f-2)}}\emph{)} extends to an injection of graded rings
  \[
  \varphi'\colon \gr_{I_1}^\bullet(R_{i+1})\hookrightarrow \gr_{I_0}^\bullet(R_{i+1}).
  \]
  \end{enumerate}
\end{proposition}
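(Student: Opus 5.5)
For (1) the plan is to apply \cref{lem:pair ext} (1) $\Rightarrow$ (2) together with \cref{rem:torgrisom} (a) to the pairs $(A,I)=(R_i,I_0R_i)$ and $(B,J)=(R_{i+1},I_0R_{i+1})$, with $\Phi=\ol{t_i}$. This map is injective by \textbf{(b)}. The hypotheses $IA_{\Itor}=0$ and $JB_{\Jtor}=0$ are the first half of \textbf{(g)}.

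For $\Phi_{\mathrm{tor}}$ I take the restriction of $t_i$ to $(R_i)_{\Iztor}$. It lands in $(R_{i+1})_{\Iztor}$ because $t_i$ sends $I_0$ into $I_0R_{i+1}$. The identity $\ol{t_i}\circ\varphi_{I_0,R_i}=\varphi_{I_0,R_{i+1}}\circ t_i|_{\mathrm{tor}}$ is then tautological. Since the generator $a$ of $I_0$ is sent to its image in $R_{i+1}$, the induced $\Phi^\bullet$ is exactly $\gr_{I_0}^\bullet(t_i)$.

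It remains to verify the cartesian condition of \cref{rem:torgrisom} (a). Write $T_i\subset R_i/I_0R_i$ for the image of $\varphi_{I_0,R_i}$; by \cref{lem:pair ex} (2) it is injective, so $T_i\cong(R_i)_{\Iztor}$. The condition is then $\ol{t_i}^{-1}(T_{i+1})=T_i$. Concretely, by \cref{lem:grn} and \cref{lem:pair ex} every $\gr^n$ with $n\geq1$ is identified with $(R_i/I_0)/T_i$ via powers of $a$, so injectivity in every degree reduces to exactly this equality.

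The main obstacle is the inclusion ``$\subset$''. Suppose $\ol{t_i}(\bar x)\in T_{i+1}$. Applying $F_i$ and using $F_i\circ\ol{t_i}=\varphi_{R_i/I_0R_i}$ together with the commutative square in \textbf{(g)} gives $\bar x^p\in F_i(T_{i+1})=T_i$. So the task becomes showing that $T_i$ is stable under $p$-th roots. To get this I would use surjectivity of $F_i$ from \textbf{(d)}, writing $\bar x=F_i(\bar w)$. I would then combine the bijectivity of $(F_i)_{\mathrm{tor}}$ with $\Ker F_i=I_1(R_{i+1}/I_0)$ from \textbf{(f-2)}. In particular, injectivity of $F_i$ on $T_{i+1}$ gives $T_{i+1}\cap \Ker F_i=0$, and I would use this to pin down $\bar x$ inside $T_i$. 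If a direct argument stalls, I would instead exploit the description $T=\{\bar x : ax\in a^2R\}$ from \cref{lem:pair ex} (1) and compare $ax$ with $t_i(x)$ using $I_1^p=I_0R_1$.

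For (2), I would factor the given injection $R_{i+1}/I_1R_{i+1}\hookrightarrow R_{i+1}/I_0R_{i+1}$ as $\ol{t_i}\circ\ol{F_i}$. Here $\ol{F_i}\colon R_{i+1}/I_1R_{i+1}\xrightarrow{\cong}R_i/I_0R_i$ is the isomorphism induced by $F_i$, using \textbf{(d)} and \textbf{(f-2)}. It then suffices to extend $\ol{F_i}$ to an injective graded map $\gr_{I_1}^\bullet(R_{i+1})\to\gr_{I_0}^\bullet(R_i)$ and compose with the injection from (1).

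For this extension I apply \cref{lem:pair ext} and \cref{rem:torgrisom} (a) to $(R_{i+1},I_1R_{i+1})$ and $(R_i,I_0R_i)$, with $\Phi_{\mathrm{tor}}=(F_i)_{\mathrm{tor}}$. Since $I_1^p=I_0R_{i+1}$, the $I_1$-torsion and $I_0$-torsion of $R_{i+1}$ coincide. The hypothesis $I_1(R_{i+1})_{\mathrm{tor}}=0$ I would deduce from the square in \textbf{(g)} and the injectivity of $\varphi_{I_0,R_i}$. The key point is that $I_1(R_{i+1})_{\mathrm{tor}}$ maps into $\Ker F_i\cap T_{i+1}=0$ in $R_{i+1}/I_0$, and $\varphi_{I_0,R_{i+1}}$ is injective. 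Because $\ol{F_i}$ is an isomorphism and $(F_i)_{\mathrm{tor}}$ is bijective, the square is automatically cartesian. Hence the graded extension is injective (indeed an isomorphism by \cref{rem:torgrisom} (b), after arranging $(\ast)$ via \cref{rem:Zar*} or a direct choice of generators).
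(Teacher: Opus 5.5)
\textbf{There is a genuine gap in (1), at the step you call the main obstacle.} Your framework for (1) is the paper's: apply \cref{lem:pair ext} with \cref{rem:torgrisom}~(a) to $(R_i,I_0R_i)$ and $(R_{i+1},I_0R_{i+1})$, with $\Phi_{\mathrm{tor}}$ the restriction of $t_i$. But the one step with real content is the cartesian property $\overline{t_i}^{-1}(T_{i+1})=T_i$, and you do not prove it. The paper does not derive it either; it imports it, together with the Frobenius square used for (2), from \cite[Theorem 2.11]{Ha26a}.

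Your proposed reduction is also wrong. From $\overline{t_i}(\bar x)\in T_{i+1}$ you keep only $\bar x^p\in T_i$, and then aim to show that $T_i$ is closed under $p$-th roots. That is false in general. Take the perfectoid tower $R_i=\mathbb{Z}_p[p^{1/p^i}]$, $I_0=(p)$, $I_1=(p^{1/p})$. Everything is torsion free, so $T_1=0$, yet the class of $p^{1/p}$ in $R_1/pR_1$ is nonzero with vanishing $p$-th power. So applying $F_i$ discards exactly the information you need.

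Your remaining hints (surjectivity of $F_i$, and $T_{i+1}\cap\operatorname{Ker}F_i=0$) do not close this. Writing $\bar x=F_i(\bar w)$ merely turns the claim into the equivalent claim $\bar w\in F_i^{-1}(T_i)=T_{i+1}+I_1(R_{i+1}/I_0R_{i+1})$.

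Your (2) is obtained by composing with (1), so it inherits the gap. The reduction itself is correct, though, and is a legitimate alternative to the paper's second square: you factor the Frobenius injection as $\overline{t_i}\circ\overline{F_i}$ and extend $\overline{F_i}$ via the trivially cartesian square given by $(F_i)_{\mathrm{tor}}$.

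\textbf{The missing idea is to go one level further up.}
\begin{enumerate}
\item Let $\overline{t_i}(\bar x)=\bar\sigma$ with $\sigma\in(R_{i+1})_{\mathrm{tor}}$. Write $\bar x=F_i(\bar w)$ by \textbf{(d)}, so $\bar w^p=\bar\sigma$.
\item By \textbf{(g)}, choose $\sigma''\in(R_{i+2})_{\mathrm{tor}}$ with $F_{i+1}(\bar\sigma'')=\bar\sigma$.
\item Since $F_{i+1}\circ\overline{t_{i+1}}$ is the Frobenius of $R_{i+1}/I_0R_{i+1}$, we get $\overline{t_{i+1}}(\bar w)-\bar\sigma''\in\operatorname{Ker}F_{i+1}=I_1(R_{i+2}/I_0R_{i+2})$.
\item Multiply by $f_1^{p-1}$, where $f_1$ generates $I_1$. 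Use $I_1(R_{i+2})_{\mathrm{tor}}=0$, which your own argument for (2) gives at the next level, and $f_1^p\in I_0R_{i+2}$. This yields $\overline{t_{i+1}}(f_1^{p-1}\bar w)=0$.
\item By \textbf{(b)}, $f_1^{p-1}w\in I_0R_{i+1}=f_1^pR_{i+1}$. Hence $f_1^{p-1}(w-f_1r)=0$ for some $r$, so $w\in(R_{i+1})_{\mathrm{tor}}+I_1R_{i+1}$.
\item Therefore $\bar x=F_i(\bar w)\in F_i(T_{i+1})=T_i$.
\end{enumerate}
The intermediate statement in step 5 (that $\bar w^p\in T_{i+1}$ implies $w\in(R_{i+1})_{\mathrm{tor}}+I_1R_{i+1}$) is exactly the cartesian property of the Frobenius square the paper uses for (2). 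So this argument also gives the paper's direct route to (2).
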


\begin{proof}
By \cite[Theorem 2.11]{Ha26a}, the diagrams
\[
\begin{tikzcd}[column sep=large]
(R_i)_{\Iztor} \rar["\varphi_{I_0,R_i}"] \dar["(t_i)_{\mathrm{tor}}"',hookrightarrow] & R_i/I_0R_i \dar["\ol{t_i}",hookrightarrow] \\
(R_{i+1})_{\Iztor} \rar["\varphi_{I_0,R_{i+1}}"] & R_{i+1}/I_0R_{i+1},
\end{tikzcd}
\qquad
\begin{tikzcd}[column sep=large]
(R_{i+1})_{\Iztor} \rar["\varphi_{I_1,R_{i+1}}"] \dar["\varphi"'] & R_{i+1}/I_1R_{i+1} \dar["\varphi'",hookrightarrow] \\
(R_{i+1})_{\Iztor} \rar["\varphi_{I_0,R_{i+1}}"] & R_{i+1}/I_0R_{i+1}
\end{tikzcd}
\]
are cartesian for any $i\geq 0$, where $(t_i)_{\mathrm{tor}}$ is the restriction of $t_i$ and $\varphi$ is given by $x\mapsto x^p$. Thus we may apply \cref{lem:pair ext} together with \cref{rem:torgrisom} (a) to the pairs ``$(R_i,I_0R_i)$ and $(R_{i+1},I_0R_{i+1})$'' and ``$(R_{i+1},I_1R_{i+1})$ and $(R_{i+1},I_0R_{i+1})$,'' respectively.
\end{proof}

As a corollary, we have the following result.

\begin{corollary}[{cf.\ \cite[Corollary 3.12]{Ha26a}}]
\label{cor:reduced}
Let $\textrm{{\boldmath $R$}}=\{R_i,t_i\}_{i\geq 0}$ be a preperfectoid tower arising from a pair $(R,I_0)$. Fix $i\geq 0$, and suppose that $R_i$ and $R_{i+1}$ are $I_0$-adically separated.
  \begin{enumerate}
  \item The transition map $t_i\colon R_i\to R_{i+1}$ is injective.
  \item The ring $R_i$ is reduced.
  \end{enumerate}
\end{corollary}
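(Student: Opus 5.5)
The plan is to deduce both assertions from the injectivity statements in \cref{prop:grinj}, combined with $I_0$-adic separatedness. The basic principle is this. Let $f\colon A\to B$ be a ring map with $\gr_{I}^\bullet(f)$ injective, where $A$ is $I$-adically separated. Then $f$ is injective. Indeed, take $0\neq x\in A$. Separatedness gives an $n$ with $x\in I^n\setminus I^{n+1}$, so $x$ has a nonzero leading form in $\gr_I^n(A)$. Injectivity of $\gr_I^n(f)$ shows $f(x)\notin I^{n+1}B$, and in particular $f(x)\neq 0$.

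For (1), apply this principle to $t_i$ with $I=I_0$. Here \cref{prop:grinj} (1) gives that $\gr_{I_0}^\bullet(t_i)$ is injective, and $R_i$ is $I_0$-adically separated by hypothesis. Hence $t_i$ is injective.

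For (2), suppose $x\in R_i$ is nilpotent and nonzero. First I transport the problem to $R_{i+1}$. By (1), $y:=t_i(x)\in R_{i+1}$ is nonzero and nilpotent.

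Next I compare the $I_1$-adic and $I_0$-adic filtrations on $R_{i+1}$. Since $I_1^p=I_0R_{i+1}$, the two topologies coincide, so $R_{i+1}$ is also $I_1$-adically separated. Choose $n$ with $y\in I_1^nR_{i+1}\setminus I_1^{n+1}R_{i+1}$; its leading form $\bar y\in\gr_{I_1}^n(R_{i+1})$ is then nonzero. By \cref{prop:grinj} (2), $\varphi'(\bar y)\neq 0$ in $\gr_{I_0}^n(R_{i+1})$. The main point to verify is the formula for $\varphi'$ in degree $n$: it should send the class of $y$ to the class of $y^p$ modulo $I_0^{n+1}$. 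This follows from the construction of $\varphi'$ via \cref{lem:pair ext}, where $\varphi'$ is a graded ring map extending Frobenius in degree $0$ and sending the generator $b_1$ of $I_1$ to $b_1^p$, which is a generator of $I_0R_{i+1}$ up to a unit. Writing $y=b_1^nu$, we get $\varphi'(\bar y)=\overline{b_1^{pn}u^p}=\overline{y^p}$. Consequently $y^p\in I_0^nR_{i+1}\setminus I_0^{n+1}R_{i+1}$, and in particular $y^p\neq 0$. Rewriting in terms of $I_1$, this says $y^p$ lies exactly in $I_1$-filtration degree $pn$, or, if $n=0$, its class is nonzero in degree $0$.

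Finally I iterate. Apply the same argument to $y^p$, which has exact $I_1$-order $pn$; this gives $y^{p^2}\neq 0$, and inductively $y^{p^k}\neq0$ for all $k$. This contradicts nilpotency, so $R_i$ is reduced.

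The obstacle I expect is justifying the explicit degree-$n$ description of $\varphi'$, in particular verifying the leading-form compatibility. That description is exactly what makes $\gr$-injectivity say that Frobenius preserves exact order; once it is established, the iteration is immediate.
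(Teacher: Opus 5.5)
Your proposal is correct and is essentially the paper's proof: (1) is \cref{prop:grinj}~(1) combined with the criterion that a filtered map out of a separated module is injective when its associated graded map is injective (which you reprove inline), and (2) reduces to $R_{i+1}$ via (1) and then uses the injectivity of $\varphi'$ from \cref{prop:grinj}~(2), exactly as in Gabber--Ramero. If you take $b_1^p$ as the generator of $I_0R_{i+1}$ in the construction, $\varphi'$ is the $p$-power map on leading forms. One side remark is false and should be deleted: $y^p$ need not have exact $I_1$-order $pn$. For instance, in the tower $R_i=\mathbb{Z}_p[p^{1/p^i}]$ the element $y=p^{1/p^2}\in R_2$ has $I_1$-order $0$, while $y^p=p^{1/p}$ has $I_1$-order $1$. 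What you actually get is $y^p\in I_1^{pn}R_{i+1}\setminus I_1^{p(n+1)}R_{i+1}$. Your iteration only needs $y^p\neq 0$ (then choose the exact order of $y^p$ afresh), so the argument is unaffected.
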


\begin{proof}
(1) follows from \cref{prop:grinj} (1) and \cite[Chapter III, \S2.8, Corollary 1]{BouAC}.

(2) By (1), it suffices to show that $R_{i+1}$ is reduced, which can be checked by using \cref{prop:grinj} (2) (cf.\ \cite[Corollary 16.3.63 (a)]{GR24}).
\end{proof}

The second application of \cref{lem:pair ext} is the following.

\begin{proposition}
\label{prop:perfdtowergr}
Let $\textrm{{\boldmath $R$}}=\{R_i,t_i\}_{i\geq 0}$ be a preperfectoid tower arising from a pair $(R,I_0)$. Then for any $i\geq 0$, the isomorphism \eqref{eq:isomres} extends to an isomorphism of graded rings
\begin{equation}
\label{eq:perfdtower2}
\gr_{I_0^{s.\flat}}^\bullet(R_i^{s.\flat})\xr{\cong}\gr_{I_0}^\bullet(R_i).
\end{equation}
\end{proposition}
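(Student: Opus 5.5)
We apply \cref{lem:pair ext} together with \cref{rem:torgrisom} (b) to the pairs $(R_i^{s.\flat}, I_0^{s.\flat}R_i^{s.\flat})$ and $(R_i, I_0R_i)$, with $\Phi$ the isomorphism \eqref{eq:isomres}. The plan is to produce an isomorphism of torsion parts compatible with $\Phi$ and the maps $\varphi_{I,A}$, and then to verify the normalization condition ($\ast$).

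\textbf{Step 1: hypotheses on the torsion parts.} Both ideals are principal and both rings kill their torsion by the ideal. For $R_i$, this is condition \textbf{(g)}. For $R_i^{s.\flat}$, we use that $\textrm{{\boldmath $R$}}^\flat$ is a perfectoid tower arising from $(R^{s.\flat},I_0^{s.\flat})$ by \cite[Proposition 3.41]{INS25}. Hence \textbf{(f)} and \textbf{(g)} hold for it: $I_0^{s.\flat}$ is principal, and $I_0^{s.\flat}(R_i^{s.\flat})_{\Iztor}=(0)$ (with the obvious meaning of the torsion for $I_0^{s.\flat}$).

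\textbf{Step 2: the torsion isomorphism.} We need a bijection
\[
\Phi_{\mathrm{tor}}\colon (R_i^{s.\flat})_{\textrm{$I_0^{s.\flat}$-}\mathrm{tor}}\to (R_i)_{\Iztor}
\]
that is compatible with $\varphi$ and \eqref{eq:isomres}. I would take this from the tilting comparison of torsion parts in \cite{INS25} (the analogue of \cite[Lemma 3.39]{INS25} for torsion; see also \cite[Theorem 2.11]{Ha26a}), which identifies
\[
(R_i^{s.\flat})_{\mathrm{tor}}\cong\varprojlim_{(F_{i+m})_{\mathrm{tor}}}(R_{i+m})_{\Iztor}.
\]
Since each $(F_j)_{\mathrm{tor}}$ is bijective, the $0$-th projection is a bijection onto $(R_i)_{\Iztor}$. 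Compatibility with $\varphi_{I_0,R_i}$ follows from the commutative squares in \textbf{(g)}, since $\varphi_{I_0^{s.\flat}}$ is computed componentwise through \eqref{eq:isomres}. This is the step I expect to be the main obstacle: one must check carefully that the inverse-limit description of the small tilt's torsion is available and that the squares commute. If the cited result does not state this directly, I would prove it by hand. I would show that an element of $R_i^{s.\flat}$ killed by the generator of $I_0^{s.\flat}$ corresponds to a compatible system of $I_0$-torsion elements, using $I_0(R_j)_{\Iztor}=0$ and \cref{lem:pair ex} (2) levelwise.

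\textbf{Step 3: condition ($\ast$) and conclusion.} Every $R_i$ is $I_0$-adically Zariskian by \textbf{(e)}, so \cref{rem:Zar*} shows that ($\ast$) holds, with $B=R_i$. The statement only assumes a preperfectoid tower, so \textbf{(e)} may not be available. In that case I would instead check ($\ast$) directly. Choose a generator $a^\flat$ of $I_0^{s.\flat}$ whose $0$-th component lifts to a generator $a$ of $I_0$; this is possible by the construction of $I_0^{s.\flat}$ as the kernel of the $0$-th projection, which is generated by a compatible system of $p$-power roots of a generator. Then verify that $\Phi^1$ sends $a^\flat$ mod $(I_0^{s.\flat})^2$ to $a$ mod $I_0^2R_i$. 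Since $\Phi$ is an isomorphism, \cref{rem:torgrisom} (b) then upgrades $\Phi$ to an isomorphism of graded rings, giving \eqref{eq:perfdtower2}.
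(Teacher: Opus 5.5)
Your Steps 1 and 2 are the paper's proof. The paper applies \cref{lem:pair ext} together with \cref{rem:torgrisom} (b) to the pairs $(R_i^{s.\flat},I_0^{s.\flat}R_i^{s.\flat})$ and $(R_i,I_0R_i)$. It takes \eqref{eq:isomres} from \cite[Lemma 3.39]{INS25} and the compatible bijection of torsion parts directly from \cite[Theorem 3.35 (2)]{INS25}.

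Step 3 should be dropped, and as written it does not work. Condition ($\ast$) only matters when a graded map $\Phi^\bullet$ is given in advance, i.e.\ for the direction (2) $\Rightarrow$ (1). In the direction you use, you first fix generators $a,b$ and define $\Phi^1$ on the cokernels in \eqref{eq:torgr} by $\Phi^1(ax \bmod I^2)=b\,\Phi(x)\bmod J^2$. Three things then follow:
\begin{enumerate}
\item $\Phi^1(a\bmod I^2)=b\bmod J^2$ holds by construction.
\item $\Phi^1$ is bijective by the five lemma.
\item \cref{lem:grn} extends bijectivity to every degree $n\geq 1$, since multiplication by the generator is bijective in positive degrees once the torsion is killed by the ideal.
\end{enumerate}

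Your proposed fallback, by contrast, carries no content. Every element of $I_0^{s.\flat}$ has $0$-th component $0$ by definition, so requiring that it ``lift to a generator of $I_0$'' imposes nothing. There is also no independently defined $\Phi^1$ against which to test the normalization. As you noticed, the appeal to \textbf{(e)} is not available for a preperfectoid tower either.
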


\begin{proof}
Due to \cite[Theorem 3.35 (2), Lemma 3.39]{INS25}, we only have to apply \cref{lem:pair ext} together with \cref{rem:torgrisom} (b) to the pairs $(R_i^{s.\flat},I_0^{s.\flat}R_i^{s.\flat})$ and $(R_i,I_0R_i)$.
\end{proof}

Moreover, we can now prove the main theorem as follows.

\begin{theorem}
\label{thm:gGr}
Let $\textrm{{\boldmath $R$}}=\{R_i,t_i\}_{i\geq 0}$ be a purely inseparable tower arising from a pair $(R,I_0)$. Then, under \emph{\textbf{(d)}} $\sim$ \emph{\textbf{(f)}}, the condition \emph{\textbf{(g)}} is equivalent to the following one:
  \begin{itemize}
  \item[{\boldmath $(\mathbf{g}')$}] For every $i\geq 0$, $I_0(R_i)_{\Iztor}=(0)$. Moreover, there exists an isomorphism of graded rings $\Phi_i^\bullet\colon \gr_{I_1}^\bullet(R_{i+1}) \to \gr_{I_0}^\bullet(R_i)$ such that the diagram of rings
  \begin{equation}
  \label{eq:Phi0}
  \begin{tikzcd}
  R_{i+1}/I_0R_{i+1} \rar["F_i"] \dar & R_i/I_0R_i \\
  R_{i+1}/I_1R_{i+1} \ar[ru,"\Phi_i^0"',"\cong" sloped]
  \end{tikzcd}
  \end{equation}
  commutes, where the vertical arrow is the canonical projection.
  \end{itemize}
\end{theorem}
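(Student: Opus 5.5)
The plan is to reduce both implications to \cref{lem:pair ext} and \cref{rem:torgrisom}~(b), applied to the pairs $(R_{i+1},I_1R_{i+1})$ and $(R_i,I_0R_i)$ with
\[
\Phi\colon R_{i+1}/I_1R_{i+1}\xrightarrow{\cong}R_i/I_0R_i
\]
the isomorphism induced by $F_i$. The map $F_i$ descends to such a $\Phi$ and is bijective because of \textbf{(d)} and \textbf{(f-2)}. Both implications share the condition $I_0(R_i)_{\Iztor}=(0)$, so it can be assumed throughout. It then remains to match the bijection $(F_i)_{\mathrm{tor}}$ of \textbf{(g)} with an isomorphism of graded rings as in $(\mathbf{g}')$.

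First I will check the hypotheses of \cref{lem:pair ext}. Since $I_1^p=I_0R_1$, the ideals $I_1R_{i+1}$ and $I_0R_{i+1}$ have the same radical, so $(R_{i+1})_{\textrm{$I_1$-}\mathrm{tor}}=(R_{i+1})_{\Iztor}$. Moreover $I_1(R_{i+1})_{\Iztor}=0$, because this torsion is killed by $I_0=I_1^p$ and $I_0$-torsion elements of $R_{i+1}$ are killed by $I_1$. I expect the latter to follow from \cite[Theorem 2.11]{Ha26a} or its proof, and I will re-derive it directly if necessary. Condition~($\ast$) holds by \cref{rem:Zar*}, since $R_i$ is $I_0$-adically Zariskian by \textbf{(e)}.

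The next step is to compare the maps $\varphi_{I_1,R_{i+1}}$ and $\varphi_{I_0,R_{i+1}}$. The map $\varphi_{I_0,R_{i+1}}$ equals $\varphi_{I_1,R_{i+1}}$ followed by the inclusion $R_{i+1}/I_1\hookrightarrow R_{i+1}/I_0$? No: the two torsion maps land in different quotients, and the correct relation is
\[
\varphi_{I_1,R_{i+1}}=\mathrm{pr}\circ\varphi_{I_0,R_{i+1}},
\]
where $\mathrm{pr}\colon R_{i+1}/I_0\to R_{i+1}/I_1$ is the projection. Since $\Phi\circ\mathrm{pr}=F_i$, a map $\Phi_{\mathrm{tor}}$ satisfies $\Phi\circ\varphi_{I_1,R_{i+1}}=\varphi_{I_0,R_i}\circ\Phi_{\mathrm{tor}}$ exactly when it makes the square of \textbf{(g)} commute.

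For \textbf{(g)}$\Rightarrow(\mathbf{g}')$, I first show that $(F_i)_{\mathrm{tor}}$ is a ring map. Both $\varphi_{I_0,R_i}$ and $F_i\circ\varphi_{I_0,R_{i+1}}$ are multiplicative, and $\varphi_{I_0,R_i}$ is injective by \cref{lem:pair ex}~(2), so commutativity of the square forces $(F_i)_{\mathrm{tor}}$ to be a homomorphism of non-unital rings. \Cref{rem:torgrisom}~(b) then gives the isomorphism $\Phi_i^\bullet$ extending $\Phi$, and $\Phi_i^0=\Phi$ makes the diagram \eqref{eq:Phi0} commute.

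For $(\mathbf{g}')\Rightarrow$\textbf{(g)}, I apply \cref{rem:torgrisom}~(b) in the other direction. This yields an isomorphism $\Phi_{\mathrm{tor}}$ compatible with $\varphi_{I_1}$ and $\varphi_{I_0}$, and by the previous paragraph it serves as $(F_i)_{\mathrm{tor}}$.

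The main obstacle is the vanishing $I_1(R_{i+1})_{\Iztor}=0$ under only \textbf{(a)}--\textbf{(f)} and $I_0(R_i)_{\Iztor}=0$, without presupposing \textbf{(g)}. If the cited result does not supply it, I will try to prove it directly, using surjectivity of $F_i$ and the condition $I_0(R_{i+1})_{\Iztor}=0$ from $(\mathbf{g}')$ and \textbf{(g)}. Concretely, write $I_1=(f)$ and let $x$ be torsion, so that $f^px=0$. Then $(fx)^p=0$ holds in $R_{i+1}$, and I would combine this with reducedness-type arguments modulo $I_0$ to conclude $fx=0$.
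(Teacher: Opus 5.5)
Your reduction is the paper's own: apply \cref{lem:pair ext} with \cref{rem:torgrisom}~(b) to $(R_{i+1},I_1R_{i+1})$ and $(R_i,I_0R_i)$, with $\Phi$ induced by $F_i$ and ($\ast$) coming from \textbf{(e)} via \cref{rem:Zar*}. The bookkeeping you add is correct: the identity $\varphi_{I_1,R_{i+1}}=\mathrm{pr}\circ\varphi_{I_0,R_{i+1}}$, and multiplicativity of $(F_i)_{\mathrm{tor}}$ from injectivity of $\varphi_{I_0,R_i}$. The gap is the point you flag yourself. \cref{lem:pair ext} needs $I_1(R_{i+1})_{\Iztor}=(0)$, which the paper's one-line proof leaves implicit and which you do not prove. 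Your first justification is circular, since it assumes that $I_0$-torsion elements are killed by $I_1$. The fallback via $(fx)^p=0$ and reducedness also fails. No reducedness is available here: \cref{cor:reduced} requires $I_0$-adic separatedness and \textbf{(g)}, and modulo $I_0$ the element $f$ is itself nilpotent. Without this vanishing the $R_{i+1}$-row of \eqref{eq:torgr} is not known to be exact, so neither direction goes through as written.

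The vanishing does hold, by a different argument in each direction. Write $I_1R_{i+1}=(f)$ and $T=(R_{i+1})_{\Iztor}$. Since $I_0R_{i+1}=f^pR_{i+1}$, both \textbf{(g)} and $(\mathbf{g}')$ give $f^pT=0$.

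\emph{Assuming \textbf{(g)}.} For $x\in T$, the class $\varphi_{I_0,R_{i+1}}(fx)$ lies in $I_1(R_{i+1}/I_0R_{i+1})=\Ker(F_i)$ by \textbf{(f-2)}. Hence $\varphi_{I_0,R_i}\bigl((F_i)_{\mathrm{tor}}(fx)\bigr)=0$, and injectivity of $\varphi_{I_0,R_i}$ and of $(F_i)_{\mathrm{tor}}$ gives $fx=0$.

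\emph{Assuming $(\mathbf{g}')$.} By \cref{rem:Zar*}, pick a generator $g$ of $I_0R_i$ with $\Phi_i^1(f\ \mathrm{mod}\ I_1^2R_{i+1})=g\ \mathrm{mod}\ I_0^2R_i$. Since $I_0(R_i)_{\Iztor}=(0)$, multiplication by $g$ is bijective $\gr^n_{I_0}(R_i)\to\gr^{n+1}_{I_0}(R_i)$ for $n\geq 1$ by \cref{lem:grn}. Transporting through $\Phi_i^\bullet$, the same holds for multiplication by $f$ on $\gr^\bullet_{I_1}(R_{i+1})$. Then \cref{lem:grn}, applied for all $n\geq 1$, gives $T\subset R_{i+1}[f]+fR_{i+1}$. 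For $x\in T$ write $x=y+fz$ with $fy=0$; then $fz\in T$, so $z\in T$ and $fx=f^2z$. Iterating gives $fx=f^{k+1}z_k$ with $z_k\in T$ for all $k$, and $k=p-1$ yields $fx=0$.

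With this inserted, your proof is complete.
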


\begin{proof}
Due to condition \textbf{(e)}, we only have to apply \cref{lem:pair ext} together with \cref{rem:torgrisom} (2) to the pairs $(R_{i+1},I_1R_{i+1})$ and $(R_i,I_0R_i)$ (cf.\ \cref{rem:Zar*}).
\end{proof}

\begin{remark}
\label{rem:w/oe}
In view of the proof of \cref{thm:gGr}, we can deduce \textbf{(g)} from {\boldmath $(\mathbf{g}')$} without \textbf{(e)} if the following condition is satisfied.
  \begin{itemize}
  \item[($\ast$)] There exist generators $f_0$ and $f_1$ of $I_0$ and $I_1$, respectively, such that $F_i(f_1\ \mathrm{mod}\ I_1R_{i+1}) = f_0\ \mathrm{mod}\ I_0R_i$.
  \end{itemize}
\end{remark}

As a corollary, we have the following result.

\begin{corollary}[{cf.\ \cite[Corollary 4.14]{Ha26a}}]
\label{cor:Krulldim}
Let $\textrm{{\boldmath $R$}}=\{R_i\}_{i\geq 0}$ be a perfectoid tower arising from a pair $(R,I_0)$, and let $\textrm{{\boldmath $R$}}^\flat=\{R_i^{s.\flat}\}_{i\geq 0}$ denote its tilt. Assume that $R_i$ is a Noetherian local ring for any $i\geq 0$. Then all layers of {\boldmath $R$} and $\textrm{{\boldmath $R$}}^\flat$ have the same Krull dimension:
\[
\begin{tikzcd}[sep=small]
\dim R \rar[equal] \dar[equal] & \dim R_0 \rar[equal] \dar[equal] & \dim R_1 \rar[equal] \dar[equal] & \cdots \rar[equal] & \dim R_i \rar[equal] \dar[equal] & \cdots \\
\dim R^{s.\flat} \rar[equal] & \dim R_0^{s.\flat} \rar[equal] & \dim R_1^{s.\flat} \rar[equal] & \cdots \rar[equal] & \dim R_i^{s.\flat} \rar[equal] & \cdots
\end{tikzcd}
\]
\end{corollary}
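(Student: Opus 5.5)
We compare both towers through their conormal cones. Krull dimension of a Noetherian local ring with a principal ideal can be read off from the associated graded ring. Concretely, for a Noetherian local ring $A$ and a proper ideal $I\subset A$, set $G=\gr_I^\bullet(A)$. Since $G$ is the associated graded ring of the Noetherian filtered ring $A$, it is a finitely generated $A/I$-algebra, and the standard result (e.g.\ Matsumura, Theorem 15.7) gives $\dim G=\dim A$. We therefore aim to show that $\dim R_i=\dim \gr_{I_0}^\bullet(R_i)$ and $\dim R_i^{s.\flat}=\dim\gr_{I_0^{s.\flat}}^\bullet(R_i^{s.\flat})$. Equality across the rows then comes from comparing these graded rings.

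\textbf{Step 1 (vertical equalities).} By \cref{prop:perfdtowergr}, $\gr_{I_0^{s.\flat}}^\bullet(R_i^{s.\flat})\cong\gr_{I_0}^\bullet(R_i)$. The right-hand side is Noetherian, being finitely generated over $R_i/I_0R_i$ by the class of a generator. To transfer the dimension formula to $R_i^{s.\flat}$, we must know that $R_i^{s.\flat}$ is Noetherian local. I expect this to be the main obstacle. I would first try to invoke the results of \cite{INS25} or \cite{Ha26a}. The ring $R_i^{s.\flat}$ is $I_0^{s.\flat}$-adically complete, since it is an inverse limit with surjective transition maps, and it is local because $R_i^{s.\flat}/I_0^{s.\flat}\cong R_i/I_0R_i$ is local and $I_0^{s.\flat}$ lies in the Jacobson radical. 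Its associated graded ring is Noetherian. A complete separated filtered ring whose associated graded ring is Noetherian is itself Noetherian (Bourbaki, \emph{Commutative Algebra} III \S2.9, Corollary 2). This gives $\dim R_i^{s.\flat}=\dim\gr^\bullet(R_i^{s.\flat})=\dim\gr^\bullet(R_i)=\dim R_i$.

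\textbf{Step 2 (horizontal equalities).} We use \cref{thm:gGr}, which provides $\gr_{I_1}^\bullet(R_{i+1})\cong\gr_{I_0}^\bullet(R_i)$. Since $I_1^p=I_0R_{i+1}$, the ideals $I_1R_{i+1}$ and $I_0R_{i+1}$ have the same radical. Applying the dimension formula to both ideals in the Noetherian local ring $R_{i+1}$ gives
\[
\dim R_{i+1}=\dim\gr_{I_1}^\bullet(R_{i+1})=\dim\gr_{I_0}^\bullet(R_i)=\dim R_i .
\]
The formula holds for any proper ideal, so the fact that the two ideals differ does not matter. Finally, $R_0=R$ and $R_0^{s.\flat}=R^{s.\flat}$ by definition, which accounts for the leftmost equalities. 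Alternatively, one could obtain the bottom row by applying Step 2 to the perfectoid tower $\textrm{{\boldmath $R$}}^\flat$, but this is redundant given Step 1.

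The delicate point is the Noetherian and local property of the small tilts, in particular checking that $I_0^{s.\flat}R_i^{s.\flat}$ is principal and that $R_i^{s.\flat}$ is adically complete. Both should follow from \cite[Lemma 3.39]{INS25} and the inverse-limit description. If that route fails, I would fall back on citing the Noetherianity statement for small tilts from \cite{INS25}.
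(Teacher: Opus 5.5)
Your proposal is correct and takes the same route as the paper. The paper's one-line proof combines \cite[Theorem 15.7]{Mat2} with \cref{thm:gGr}; your use of \cref{prop:perfdtowergr} for the vertical equalities, and your argument that each $R_i^{s.\flat}$ is Noetherian local (adically complete with Noetherian associated graded ring), fill in what that line leaves implicit.

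One point to tighten: the $I_0^{s.\flat}$-adic completeness of $R_i^{s.\flat}$ does not follow just from the transition maps being surjective. You need the kernels of the projections $R_i^{s.\flat}\to R_{i+m}/I_0R_{i+m}$ to be cofinal with the powers of $I_0^{s.\flat}R_i^{s.\flat}$, which is the fact you should take from \cite{INS25}.
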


\begin{proof}
The assertion follows from \cref{thm:gGr} and \cite[Theorem 15.7]{Mat2}.
\end{proof}

Finally, we apply \cref{thm:gGr} to show that preperfectoid towers are stable under weakly \'{e}tale base change.
For an $\F_p$-algebra $A$, let $F_*A$ denote the ring $A$ considered as an $A$-module via restriction of scalars for the absolute Frobenius.

\begin{lemma}
Let $\textrm{{\boldmath $R$}}=\{R_i,t_i\}_{i\geq 0}$ be a purely inseparable tower arising from a pair $(R,I_0)$, and fix $i\geq 0$.
  \begin{enumerate}
  \item The $i$-th Frobenius projection $F_i\colon R_{i+1}/I_0R_{i+1}\to F_*(R_i/I_0R_i)$ is $R_i/I_0R_i$-linear.
  \item Assume that {\boldmath $R$} satisfies \emph{\textbf{(f)}} and {\boldmath $(\mathbf{g}')$}. Then the graded ring isomorphism $\Phi_i^\bullet \colon \gr_{I_1}^\bullet(R_{i+1})\to F_*\gr_{I_0}^\bullet(R_i)$ in condition {\boldmath $(\mathbf{g}')$} is $R_i/I_0R_i$-linear.
  \end{enumerate}
\end{lemma}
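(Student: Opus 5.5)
The plan is to reduce both assertions to a single identity: $F_i(\ol{t_i}(a)) = a^p$ in $R_i/I_0R_i$ for every $a\in R_i/I_0R_i$. The tool for (1) is the injectivity of $\ol{t_i}$ from condition \textbf{(b)}. For (2) we will use that $\Phi_i^\bullet$ is a morphism of graded rings whose degree-$0$ part is determined by $F_i$ through diagram \eqref{eq:Phi0}.

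For (1), the $R_i/I_0R_i$-module structures are as follows. On $R_{i+1}/I_0R_{i+1}$, $a$ acts via multiplication by $\ol{t_i}(a)$. On $F_*(R_i/I_0R_i)$, $a$ acts via multiplication by $a^p$. So we must show $F_i(\ol{t_i}(a)x)=a^pF_i(x)$ for all $a\in R_i/I_0R_i$ and $x\in R_{i+1}/I_0R_{i+1}$. Additivity is automatic, since $F_i$ is a ring homomorphism. As $\ol{t_i}$ is injective, it suffices to compare both sides after applying $\ol{t_i}$. By the defining property $\ol{t_i}\circ F_i=\varphi_{R_{i+1}/I_0R_{i+1}}$, the left side becomes
\[
(\ol{t_i}(a)x)^p=\ol{t_i}(a)^p x^p .
\]
The right side becomes
\[
\ol{t_i}(a^p)\,\ol{t_i}(F_i(x))=\ol{t_i}(a)^p x^p .
\]
The two agree. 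In particular, taking $x=1$ gives $F_i(\ol{t_i}(a))=a^p$.

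For (2), the $R_i/I_0R_i$-module structure on $\gr_{I_1}^\bullet(R_{i+1})$ comes through degree $0$, via
\[
R_i/I_0R_i\xr{\ol{t_i}}R_{i+1}/I_0R_{i+1}\twoheadrightarrow R_{i+1}/I_1R_{i+1}=\gr^0_{I_1}(R_{i+1}).
\]
On $F_*\gr_{I_0}^\bullet(R_i)$, an element $a$ acts by multiplication by $a^p\in\gr^0_{I_0}(R_i)$. Since $\Phi_i^\bullet$ is a graded ring homomorphism, for $c\in\gr^0_{I_1}(R_{i+1})$ and $y\in\gr^n_{I_1}(R_{i+1})$ we have
\[
\Phi_i^n(cy)=\Phi_i^0(c)\,\Phi_i^n(y).
\]
So it suffices that $\Phi_i^0$ sends the image of $\ol{t_i}(a)$ in $R_{i+1}/I_1R_{i+1}$ to $a^p$. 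By the commutativity of \eqref{eq:Phi0}, this image equals $F_i(\ol{t_i}(a))$, which is $a^p$ by the case $x=1$ of (1).

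There is no real obstacle here. The only point requiring care is to pin down precisely the module structures involved, in particular that $F_*$ on the graded ring means the Frobenius twist through degree $0$. After that, everything reduces to the identity $\ol{t_i}\circ F_i=\varphi$ and the injectivity of $\ol{t_i}$.
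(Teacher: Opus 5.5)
Your proof is correct and follows the paper's route. The paper calls (1) straightforward, citing \cite[Lemma 3.20 (1)]{Ha26a}, and your check via the injectivity of $\ol{t_i}$ and $\ol{t_i}\circ F_i=\varphi$ is exactly that verification. It then deduces (2) from (1) and the fact that $\Phi_i^\bullet$ is a graded ring homomorphism whose degree-$0$ part is given by $F_i$ through \eqref{eq:Phi0}, which is what you do.
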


\begin{proof}
(1) is straightforward (see \cite[Lemma 3.20 (1)]{Ha26a}).

(2) follows from (1) and the fact that $\Phi_i^\bullet$ is a graded ring homomorphism.
\end{proof}

Following \cite[Definition 3.1.1]{GR}, we say that a ring homomorphism $A\to B$ is called \emph{weakly \'{e}tale} if it is flat and the multiplication map $B\otimes_AB\to B$ is flat. Note that this property is stable under base change (\cite[Lemma 3.1.2 (i)]{GR}). For example, ind-\'{e}tale morphisms are weakly \'{e}tale. An important property of weakly \'{e}tale morphisms is the following.

\begin{theorem}[{\cite[Theorem 3.5.13]{GR}, \cite[\href{https://stacks.math.columbia.edu/tag/0F6W}{Tag 0F6W}]{stacks-project}}]
\label{thm:wetFrob}
If $A\to B$ is a weakly \'{e}tale homomorphism of $\F_p$-algebras, then the relative Frobenius $\varphi_{B/A}$ is an isomorphism.
\end{theorem}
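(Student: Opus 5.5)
The statement to prove is \cref{thm:wetFrob}: for a weakly \'{e}tale $A\to B$ of $\F_p$-algebras, the relative Frobenius $\varphi_{B/A}\colon B\otimes_{A,\varphi_A}A\to B$ is an isomorphism. I would reduce to showing that the relative Frobenius is simultaneously weakly \'{e}tale and a universal homeomorphism. I would then conclude with the general fact that a weakly \'{e}tale universal homeomorphism is an isomorphism.

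\textbf{Step 1.} Write $B^{(1)}=B\otimes_{A,\varphi_A}A$. Since $\varphi_{B/A}$ is a morphism between the weakly \'{e}tale $A$-algebras $B^{(1)}$ and $B$, it is itself weakly \'{e}tale. This uses two standard properties of weakly \'{e}tale maps: stability under base change (\cite[Lemma 3.1.2 (i)]{GR}), and the fact that a map between weakly \'{e}tale $A$-algebras is weakly \'{e}tale (a \cite{GR} lemma; it follows by factoring $B^{(1)}\to B$ as $B^{(1)}\to B^{(1)}\otimes_AB\to B$, where the first map is a base change of $A\to B$ and the second is a base change of the diagonal $B\otimes_AB\to B$).

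\textbf{Step 2.} The relative Frobenius of any $\F_p$-algebra map is a universal homeomorphism. The reason is that the composite $B^{(1)}\to B\to B^{(1)}$, with the second map $b\mapsto b\otimes1$ up to Frobenius, is the absolute Frobenius in both orders. Hence the kernel of $\varphi_{B/A}$ consists of nilpotents, every element of $B$ has its $p$-th power in the image, and these properties persist after base change. In particular $\varphi_{B/A}$ is integral, radicial and surjective on spectra.

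\textbf{Step 3.} Show that a flat map $f\colon C\to D$ with $D\otimes_CD\to D$ flat, which is also a universal homeomorphism, is an isomorphism. The key point is that the diagonal $D\otimes_CD\to D$ is a flat surjection of rings. Its kernel $J$ therefore satisfies $J=J^2$ locally; more precisely, it is generated by idempotents locally, because a flat closed immersion is an open immersion on spectra. Since $f$ is a universal homeomorphism, $\mathrm{Spec}(D\otimes_CD)\to\mathrm{Spec}\,D$ is a homeomorphism, so this open and closed subset is everything and $J$ is nil. A nil ideal with $J=J^2$ that is locally generated by idempotents vanishes. Hence $D\otimes_CD\cong D$. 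Finally, $f$ is faithfully flat because it is flat and surjective on spectra, and $D\otimes_CD\cong D$ via the diagonal, so faithfully flat descent gives $C\cong D$.

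I expect Step 3 to be the main obstacle, specifically the claim that a flat surjection $R\to R/J$ with $J$ nil forces $J=0$. I would argue this locally: for a prime $\mathfrak p\supseteq J$, flatness of $R_{\mathfrak p}/J_{\mathfrak p}$ over the local ring $R_{\mathfrak p}$ gives $J_{\mathfrak p}=J_{\mathfrak p}^2$ and makes $J_{\mathfrak p}$ pure. A pure ideal in a local ring that is contained in the maximal ideal is zero, because finitely generated pure ideals are generated by idempotents and the only idempotent in the maximal ideal is $0$. The primes not containing $J$ do not exist, since $J$ is nil. Alternatively, one can simply cite \cite[Tag 0F6W]{stacks-project} for this step.
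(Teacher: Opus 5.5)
The paper gives no argument of its own here; it only cites \cite[Theorem 3.5.13]{GR} and \cite[Tag 0F6W]{stacks-project}. Your route is the standard proof of this fact: $\varphi_{B/A}$ is weakly \'etale and a universal homeomorphism, and a weakly \'etale universal homeomorphism is an isomorphism. The argument is correct, but three justifications are wrong as written and need repair.

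(i) \emph{Step 1: which diagonal.} The map $B^{(1)}\otimes_AB\to B$, $x\otimes b\mapsto\varphi_{B/A}(x)b$, is the graph of $\varphi_{B/A}$. A graph is a base change of the multiplication map of the \emph{source}, here $B^{(1)}\otimes_AB^{(1)}\to B^{(1)}$ (along $\mathrm{id}\otimes\varphi_{B/A}$), not of $B\otimes_AB\to B$. It is flat because $A\to B^{(1)}$ is weakly \'etale. Also, your factorization only proves flatness of $\varphi_{B/A}$. Flatness of $B\otimes_{B^{(1)}}B\to B$ needs a separate remark: $B\otimes_AB\to B\otimes_{B^{(1)}}B$ is surjective and $B\otimes_AB\to B$ is flat.

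(ii) \emph{Step 3: flat closed immersions.} The claim that a flat closed immersion is an open immersion on spectra is false without finite presentation. Take $R=\prod_{\mathbb{N}}\mathbb{F}_p$, which is absolutely flat, so $R\to R/J$ is flat for $J=\bigoplus_{\mathbb{N}}\mathbb{F}_p$. Yet $V(J)$ is not open: a clopen $V(J)$ would force $J$ (a radical ideal) to be generated by an idempotent. So $J$ need not be Zariski-locally generated by idempotents. Your argument never actually needs this claim.

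(iii) \emph{Step 3: the local argument.} $J_{\mathfrak p}$ need not be finitely generated, so the fact about finitely generated pure ideals does not apply as stated. Argue directly and globally instead. For $x\in J$, flatness of $R/J$ makes $xR\otimes_RR/J=xR/xJ\to R/J$ injective. Its image is $0$, so $x\in xJ$, i.e.\ $x=xy$ with $y\in J$. Since $J$ is nil, $1-y$ is a unit, hence $x=0$. No localization is needed.

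The rest of your argument goes through:
\begin{itemize}
\item Step 2 is fine in characteristic $p$: the base-changed map again has a map back whose composites with it are Frobenius.
\item $J$ is nil because the diagonal is a section of the homeomorphism $\mathrm{Spec}(D\otimes_CD)\to\mathrm{Spec}\,D$.
\item $C\cong D$ follows by descent along the faithfully flat epimorphism $f$.
\end{itemize}
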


The theorem can be extended to conormal cones in the following sense.

\begin{lemma}
\label{lem:wetGr}
Let $f\colon A\to B$ be a weakly \'{e}tale homomorphism of rings, and $I\subset A$ an ideal.
  \begin{enumerate}
  \item We get the canonical isomorphism of graded rings
  \[
  \alpha_I\colon \gr_I^\bullet(A)\otimes_AB \xr{\cong} \gr_{IB}^\bullet(B).
  \]
  \item The induced graded ring homomorphism $\gr_I^\bullet(f)\colon \gr_I^\bullet(A)\to\gr_{IB}^\bullet(B)$ is weakly \'{e}tale.
  \item If $p\in I$, then $\alpha_I$ and $\gr_I^\bullet(f)$ induce an isomorphism of graded   rings
  \[
  \beta_I\colon F_*\gr_I^\bullet(A) \otimes_AB \xr{\cong} F_*\gr_I^\bullet(A)\otimes_{\gr_I^\bullet(A)}\gr_{IB}^\bullet(B) \xr[\varphi_{\gr_{IB}^\bullet(B)/\gr_I^\bullet(A)}]{\cong} F_*\gr_{IB}^\bullet(B).
  \]
  \end{enumerate}
\end{lemma}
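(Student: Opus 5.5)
For (1), I will use flatness of $f$ degree by degree. The key observation is that $\gr_I^n(A)\otimes_AB = (I^n/I^{n+1})\otimes_AB$. Tensoring the exact sequence $0\to I^{n+1}\to I^n\to I^n/I^{n+1}\to 0$ with the flat $A$-module $B$ gives an exact sequence
\[
0\to I^{n+1}\otimes_AB\to I^n\otimes_AB\to (I^n/I^{n+1})\otimes_AB\to 0 .
\]
By flatness again, $I^n\otimes_AB\to B$ is injective with image $I^nB$. Hence $(I^n/I^{n+1})\otimes_AB\cong I^nB/I^{n+1}B$. These isomorphisms are compatible with multiplication, so their direct sum $\alpha_I$ is a graded ring isomorphism.

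For (2), I will observe that by (1) the map $\gr_I^\bullet(f)$ is identified with the base change
\[
\gr_I^\bullet(A)\to \gr_I^\bullet(A)\otimes_AB
\]
of $f$ along the ring map $A\to\gr_I^\bullet(A)$ (namely $A\to A/I=\gr^0_I(A)\hookrightarrow\gr^\bullet_I(A)$). Weak \'etaleness is stable under base change (\cite[Lemma 3.1.2 (i)]{GR}), so $\gr_I^\bullet(f)$ is weakly \'etale.

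For (3), I will first note that when $p\in I$ the ring $\gr_I^\bullet(A)$ is an $\F_p$-algebra, since it is an $A/I$-algebra. The first isomorphism in $\beta_I$ is then formal:
\[
F_*\gr_I^\bullet(A)\otimes_AB\cong F_*\gr_I^\bullet(A)\otimes_{\gr_I^\bullet(A)}\bigl(\gr_I^\bullet(A)\otimes_AB\bigr)\cong F_*\gr_I^\bullet(A)\otimes_{\gr_I^\bullet(A)}\gr_{IB}^\bullet(B),
\]
using (1) for the last step. Here $A$ acts on $F_*\gr_I^\bullet(A)$ through $A\to\gr_I^\bullet(A)$ followed by Frobenius, and associativity of tensor products handles this.

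The second isomorphism is the relative Frobenius of the map $\gr_I^\bullet(f)$. That map is weakly \'etale by (2) and is a map of $\F_p$-algebras, so \cref{thm:wetFrob} shows the relative Frobenius is an isomorphism.

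The main point needing care is the gradings. The relative Frobenius sends $x\otimes y$ to $x\,y^p$, so it multiplies degrees by $p$ on the second factor. This means $F_*$ must carry the appropriate regrading for $\beta_I$ to be a map of graded rings. I expect this compatibility of gradings and module structures to be the only real obstacle, and a routine check on homogeneous elements should settle it.
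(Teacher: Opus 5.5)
Your proposal is correct and follows the paper's proof: flatness of $f$ gives (1), (2) is the base change of $f$ along $A\to\gr_I^\bullet(A)$ combined with (1), and (3) is associativity of tensor products plus \cref{thm:wetFrob} applied to the weakly \'etale $\F_p$-algebra map $\gr_I^\bullet(f)$. The paper passes over your grading concern in silence, and it is harmless: only the middle term needs the twisted grading $\deg(x\otimes y)=\deg x+p\deg y$, while the composite $\beta_I$, given by $x\otimes b\mapsto \gr_I^\bullet(f)(x)\,\bar b^{\,p}$ with $\bar b\in B/IB$, already preserves the untwisted gradings when $B$ is placed in degree $0$.
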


\begin{proof}
(1) follows from the flatness of $f\otimes_A A/I^n \colon A/I^n\to B/I^nB$ for every $n\geq 0$.

(2) Since $A\to B$ is weakly \'{e}tale, so is the base change $\gr_I^\bullet(A)\to\gr_I^\bullet(A)\otimes_AB$. But $\gr_I^\bullet(A)\otimes_AB \xr{\cong} \gr_{IB}^\bullet(B)$ because $A\to B$ is flat.

(3) follows from (1) and (2), combined with \cref{thm:wetFrob}.
\end{proof}

Now we prove the base change theorem.

\begin{theorem}[{cf.\ \cite[Theorem 3.28]{Ha26a}}]
\label{thm:bc}
Let $\textrm{{\boldmath $R$}}=\{R_i,t_i\}_{i\geq 0}$ be a preperfectiod tower arising from a pair $(R,I_0)$, and let $R\to S$ be a weakly \'{e}tale homomorphism of rings. Then $\textrm{{\boldmath $R$}}\otimes_RS=\{R_i\otimes_RS, t_i\otimes_RS\}_{i\geq 0}$ is a preperfectoid tower arising from $(S,I_0S)$.
\end{theorem}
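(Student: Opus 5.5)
We verify conditions \textbf{(a)}--\textbf{(d)}, \textbf{(f)} and \textbf{(g)} for $S_i\coloneqq R_i\otimes_RS$ with the ideals $I_0S$ and $I_1S_1$. The idea is to replace \textbf{(g)} by the conormal-cone condition {\boldmath $(\mathbf{g}')$}, which is compatible with flat base change by \cref{lem:wetGr}, and then go back via \cref{rem:w/oe}. We cannot use \cref{thm:gGr} directly, since \textbf{(e)} need not hold for $S_i$ and a preperfectoid tower need not satisfy \textbf{(e)} either.

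\emph{Step 1.} Since $R_i\to S_i$ is weakly \'etale, base change gives $S_i/I_0S_i=(R_i/I_0R_i)\otimes_{R/I_0}S/I_0S$, and $S/I_0S$ is flat over $R/I_0$. Hence \textbf{(a)} and \textbf{(b)} hold by flatness. For \textbf{(c)} and \textbf{(d)}, write $\ol S\coloneqq S/I_0S$, $\ol R\coloneqq R/I_0$. We define $F_i^S$ as the composite
\[
(R_{i+1}/I_0)\otimes_{\ol R}\ol S\xr{F_i\otimes\mathrm{id}}F_*(R_i/I_0)\otimes_{\ol R}\ol S\xr{\cong}F_*\bigl((R_i/I_0)\otimes_{\ol R}\ol S\bigr).
\]
This uses the $R_i/I_0$-linearity in the preceding lemma (1), and the second arrow is the isomorphism of \cref{thm:wetFrob} applied to the weakly \'etale map $R_i/I_0\to S_i/I_0S_i$, i.e.\ the $n=0$ part of $\beta_{I_0}$ in \cref{lem:wetGr} (3). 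One checks on pure tensors that $\ol{t_i}\circ F_i^S$ is the Frobenius $x\otimes s\mapsto x^p\otimes s^p$, which gives \textbf{(c)}. Surjectivity of $F_i^S$ follows from surjectivity of $F_i$, giving \textbf{(d)}. Flatness also gives $\Ker F_i^S=\Ker(F_i)\otimes\ol S=I_1(S_{i+1}/I_0S_{i+1})$, and clearly $(I_1S_1)^p=I_0S_1$, so \textbf{(f)} holds.

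\emph{Step 2.} Flatness gives $(S_i)_{\Iztor}=(R_i)_{\Iztor}\otimes_RS$ (torsion is a filtered colimit of kernels of multiplication by $f_0^n$), so $I_0(S_i)_{\Iztor}=0$. For the graded isomorphism we cannot invoke \cref{thm:gGr} for $\textrm{{\boldmath $R$}}$, so we first produce $\Phi_i^\bullet$ for $\textrm{{\boldmath $R$}}$ directly. Choosing generators $f_0$ and $f_1$ with $f_1^p=f_0u$, we may arrange ($\ast$) of \cref{rem:w/oe} for $\textrm{{\boldmath $R$}}$: \textbf{(g)} together with the exact sequences of \cref{lem:pair ex} yields an isomorphism on degree $1$ cokernels. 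Then \cref{lem:pair ext} and \cref{rem:torgrisom} (b) give {\boldmath $(\mathbf{g}')$} for $\textrm{{\boldmath $R$}}$. We then define $\Phi_i^{S,\bullet}$ as the composite
\[
\gr_{I_1}^\bullet(S_{i+1})\cong\gr_{I_1}^\bullet(R_{i+1})\otimes_{R_i/I_0}(S_i/I_0S_i)\xr{\Phi_i^\bullet\otimes\mathrm{id}}F_*\gr_{I_0}^\bullet(R_i)\otimes_{R_i}S_i\xr{\beta_{I_0}}F_*\gr_{I_0}^\bullet(S_i).
\]
The first isomorphism is \cref{lem:wetGr} (1), the middle map is well defined by part (2) of the preceding lemma, and $\beta_{I_0}$ is \cref{lem:wetGr} (3) applied to $R_i\to S_i$. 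In degree $0$ this composite is $F_i^S$ modulo $I_1$, so \eqref{eq:Phi0} commutes. The generators $f_0\otimes1$ and $f_1\otimes1$ still satisfy ($\ast$), so \cref{rem:w/oe} gives \textbf{(g)} for $\textrm{{\boldmath $R$}}\otimes_RS$.

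\emph{Main obstacle.} The delicate point is the bookkeeping of module structures in the chain for $\Phi_i^{S,\bullet}$. We must check that the tensor products are formed over the correct rings, since $\gr_{I_1}^\bullet(R_{i+1})$ is regarded as an $R_i$-module through $t_i$. We must also check that $\beta_{I_0}$ is compatible with the Frobenius twist, so that the composite is a ring map. I would verify the latter on generators $x\otimes s$, using $s\mapsto s^p$, exactly as in the degree-$0$ check of Step 1.
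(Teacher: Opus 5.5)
Your proposal is correct and takes essentially the same route as the paper. Both arguments reduce \textbf{(g)} to {\boldmath $(\mathbf{g}')$} together with ($\ast$) via \cref{rem:w/oe}, build the new graded isomorphism by transporting $\Phi_i^\bullet$ through the isomorphisms $\alpha$ and $\beta$ of \cref{lem:wetGr}, and use that $f_1\otimes 1$, $f_0\otimes 1$ still satisfy ($\ast$). The differences are that you verify \textbf{(a)}--\textbf{(d)} and \textbf{(f)} directly instead of citing the earlier paper, and you make explicit (via \cref{lem:pair ex} and the cokernel construction) why {\boldmath $(\mathbf{g}')$} holds for the original tower without \textbf{(e)}, which the paper leaves implicit.
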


\begin{proof}
For the verification of conditions \textbf{(a)} $\sim$ \textbf{(d)}, \textbf{(f)}, we refer to the proof of \cite[Theorem 3.28]{Ha26a}. We check the remaining condition \textbf{(g)}. By \cref{rem:w/oe}, it suffices to verify the condition {\boldmath $(\mathbf{g}')$} in \cref{thm:gGr}.
Fix $i\geq 0$. Since $R\to S$ is flat, we have $(R_i\otimes_RS)_{\textrm{$I_0S$-}\mathrm{tor}}=(R_i\otimes_RS)_{\Iztor}=(R_i)_{\Iztor}\otimes_RS$. Hence we deduce from $I_0(R_i)_{\Iztor}=(0)$ that $I_0S(R_i\otimes_RS)_{\textrm{$I_0S$-}\mathrm{tor}}=(0)$. Moreover, since $R_i\to S_i$ is flat for every $i\geq 0$, we get induced graded ring isomorphisms
\[
\alpha_{I_1R_{i+1}}\colon \gr_{I_1R_{i+1}}^\bullet(R_{i+1})\otimes_{R_{i+1}}S_{i+1} \xr{\cong} \gr_{I_1S_{i+1}}^\bullet(S_{i+1}),\quad
\beta_{I_0R_i}\colon F_*\gr_{I_0R_i}^\bullet(R_i)\otimes_{R_i}S_i \xr{\cong} F_*\gr_{I_0S_i}^\bullet(S_i)
\]
by \cref{lem:wetGr}. These maps yield a diagram of graded rings
\[
\begin{tikzcd}
\gr_{I_1R_{i+1}}^\bullet(R_{i+1})\otimes_{\ol{R_i}}\ol{S_i} \rar["\Phi_i^\bullet\otimes_{\ol{R_i}}\ol{S_i}","\cong"'] \dar["\cong"] & F_*\gr_{I_0R_i}^\bullet(R_i) \otimes_{\ol{R_i}}\ol{S_i} \dar["\cong"'] \\
\gr_{I_1R_{i+1}}^\bullet(R_{i+1})\otimes_{R_{i+1}} S_{i+1} \dar["\alpha_{I_1R_{i+1}}"',"\cong"] & F_*\gr_{I_0R_i}^\bullet(R_i) \otimes_{R_i}S_i \dar["\beta_{I_0R_i}","\cong"'] \\
\gr_{I_1S_{i+1}}^\bullet(S_{i+1}) \rar["\Psi_i^\bullet"',"\cong",dashed]  & F_*\gr_{I_0S_i}^\bullet(S_i)
\end{tikzcd}
\]
where $\Phi_i^\bullet$ is the isomorphism obtained by {\boldmath $(\mathbf{g}')$} for {\boldmath $R$}. Then we have the desired isomorphism $\Psi_i^\bullet$ obtained as the composite.
\end{proof}

We give a few examples. For a pair $(A,I)$, let $A^{\mathrm{h}}$ denote the $I$-adic henselization of $A$. If $(A,I),(B,J)$ are pairs and $A\to B$ is an integral homomorphism such that $IB=J$, then the canonical morphism $B\otimes_A A^{\mathrm{h}} \to B^{\mathrm{h}}$ is an isomorphism (\cite[\href{https://stacks.math.columbia.edu/tag/0DYE}{Tag 0DYE}]{stacks-project}). Hence we obtain the following.

\begin{corollary}
Let $\textrm{{\boldmath $R$}}=\{R_i,t_i\}_{i\geq 0}$ be a preperfectoid tower arising from a pair $(R,I_0)$ such that each $t_i\colon R_i\to R_{i+1}$ is integral. For each $i\geq 0$, let $R_i^{\mathrm{h}}$ denote the $I_0$-adic henselization of $R_i$. Then the induced tower
\[
\textrm{{\boldmath $R$}}^{\mathrm{h}} \colon\quad R^{\mathrm{h}}\to R_1^{\mathrm{h}} \to \cdots \to R_i^{\mathrm{h}}\to\cdots
\]
is a perfectoid tower arising from $(R^{\mathrm{h}},I_0R^{\mathrm{h}})$.
\end{corollary}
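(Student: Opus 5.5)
The claim is that $\textrm{{\boldmath $R$}}^{\mathrm{h}}$ is a perfectoid tower arising from $(R^{\mathrm{h}},I_0R^{\mathrm{h}})$. The plan is to realize it as a base change of $\textrm{{\boldmath $R$}}$ along $R\to R^{\mathrm{h}}$, apply \cref{thm:bc}, and then check the one remaining axiom \textbf{(e)} by hand, since \cref{thm:bc} only yields a \emph{pre}perfectoid tower.

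First, the map $R\to R^{\mathrm{h}}$ is ind-\'etale, being a filtered colimit of \'etale $R$-algebras. Hence it is weakly \'etale. By \cref{thm:bc}, the tower $\textrm{{\boldmath $R$}}\otimes_RR^{\mathrm{h}}=\{R_i\otimes_RR^{\mathrm{h}}\}_{i\geq 0}$ is a preperfectoid tower arising from $(R^{\mathrm{h}},I_0R^{\mathrm{h}})$.

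Second, we identify $R_i\otimes_RR^{\mathrm{h}}$ with $R_i^{\mathrm{h}}$. The composite $R=R_0\to R_i$ is integral, as a finite composite of the integral maps $t_j$. The $I_0R_i$-adic henselization $R_i^{\mathrm{h}}$ is formed with respect to the ideal $I_0R_i$, which is the extension of $I_0$. So \cite[\href{https://stacks.math.columbia.edu/tag/0DYE}{Tag 0DYE}]{stacks-project} gives canonical isomorphisms $R_i\otimes_RR^{\mathrm{h}}\xr{\cong}R_i^{\mathrm{h}}$. These are compatible with the transition maps $t_i\otimes R^{\mathrm{h}}$ and $t_i^{\mathrm{h}}$ by functoriality of henselization; this uses the same lemma for the integral maps $R_i\to R_{i+1}$, together with the transitivity $R_{i+1}\otimes_{R_i}R_i^{\mathrm{h}}\otimes_RR^{\mathrm h}$-type identifications. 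Thus $\textrm{{\boldmath $R$}}^{\mathrm{h}}\cong\textrm{{\boldmath $R$}}\otimes_RR^{\mathrm{h}}$ as towers, and $\textrm{{\boldmath $R$}}^{\mathrm{h}}$ is preperfectoid.

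Finally, we verify \textbf{(e)}. Each $R_i^{\mathrm{h}}$ is henselian along $I_0R_i^{\mathrm{h}}$. In particular $I_0R_i^{\mathrm{h}}$ is contained in the Jacobson radical, i.e.\ $R_i^{\mathrm{h}}$ is $I_0$-adically Zariskian (\cref{def:Zariskian}). With \textbf{(a)}$\sim$\textbf{(d)}, \textbf{(f)}, \textbf{(g)} already in hand, all conditions \textbf{(a)}$\sim$\textbf{(g)} of \cref{def:perfectoidtower} hold, so $\textrm{{\boldmath $R$}}^{\mathrm{h}}$ is a perfectoid tower.

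The only step needing real care is the compatibility of the isomorphisms from \cite[\href{https://stacks.math.columbia.edu/tag/0DYE}{Tag 0DYE}]{stacks-project} with the transition maps. I would handle it through the universal property of henselization: both $t_i\otimes R^{\mathrm{h}}$ and $t_i^{\mathrm{h}}$ are the unique maps of henselian pairs extending $R_i\to R_{i+1}\to R_{i+1}^{\mathrm{h}}$.
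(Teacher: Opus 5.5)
Your proposal is correct and follows the paper's argument: base change along the ind-\'etale (hence weakly \'etale) map $R\to R^{\mathrm{h}}$ via \cref{thm:bc}, the identification $R_i\otimes_RR^{\mathrm{h}}\cong R_i^{\mathrm{h}}$ from Stacks Tag 0DYE using integrality of $R\to R_i$, and condition \textbf{(e)} from the fact that henselian pairs are Zariskian. You also spell out two points the paper leaves implicit, namely the compatibility of these isomorphisms with the transition maps (via the universal property of henselization) and the explicit check of \textbf{(e)}.
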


The similar assertion holds for Zariskizations:

\begin{corollary}
\label{cor:bc}
Let $\textrm{{\boldmath $R$}}=\{R_i,t_i\}_{i\geq 0}$ be a preperfectoid tower arising from a pair $(R,I_0)$ such that each $t_i\colon R_i\to R_{i+1}$ is integral.
For each $i\geq 0$, let $R_i^{\mathrm{Zar}}\coloneqq(1+I_0R_i)^{-1}R_i$ denote the $I_0$-adic Zariskization of $R_i$. Then the induced tower
\[
\textrm{{\boldmath $R$}}^\Zar \colon\quad R^\Zar \to R_1^\Zar \to \cdots \to R_i^\Zar \to\cdots
\]
is a perfectoid tower arising from $(R^\Zar,I_0R^\Zar)$. 
\end{corollary}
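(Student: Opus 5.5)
The plan is to realize $\textrm{{\boldmath $R$}}^\Zar$ as the weakly \'{e}tale base change $\textrm{{\boldmath $R$}}\otimes_R R^\Zar$, invoke \cref{thm:bc}, and then verify condition \textbf{(e)} separately, since it is the only condition of a perfectoid tower not included in ``preperfectoid.''

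First, $R\to R^\Zar=(1+I_0)^{-1}R$ is a localization, hence weakly \'{e}tale: it is flat, and $R^\Zar\otimes_R R^\Zar\to R^\Zar$ is an isomorphism. By \cref{thm:bc}, the tower $\{R_i\otimes_R R^\Zar\}_{i\geq 0}$ is therefore a preperfectoid tower arising from $(R^\Zar,I_0R^\Zar)$. Note that $R_i\otimes_R R^\Zar=T_i^{-1}R_i$, where $T_i$ is the image of $1+I_0$ in $R_i$.

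The main step is to identify $T_i^{-1}R_i$ with $R_i^\Zar=(1+I_0R_i)^{-1}R_i$. This is where integrality of the $t_i$ enters, so that $R\to R_i$ is integral. Every element of $1+I_0R_i$ is a unit in $T_i^{-1}R_i$. To see this, observe that $T_i^{-1}R_i$ is integral over $R^\Zar$, and $I_0R^\Zar$ lies in the Jacobson radical of $R^\Zar$. By lying over and going up, every maximal ideal of $T_i^{-1}R_i$ contracts to a maximal ideal of $R^\Zar$, and hence contains $I_0$. So $1+I_0T_i^{-1}R_i$ consists of units. It follows that $T_i^{-1}R_i$ is $I_0$-adically Zariskian. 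By the universal property of Zariskization (the appendix), the natural maps $T_i^{-1}R_i\to R_i^\Zar$ and $R_i^\Zar\to T_i^{-1}R_i$ are then mutually inverse. Alternatively, one can argue directly: both are localizations of $R_i$, and each inverts the other's multiplicative set. These identifications are compatible with the transition maps, so the tower obtained from \cref{thm:bc} is exactly $\textrm{{\boldmath $R$}}^\Zar$.

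Finally, condition \textbf{(e)} holds because each $R_i^\Zar$ is $I_0$-adically Zariskian, which is part of the previous step. Hence $\textrm{{\boldmath $R$}}^\Zar$ satisfies \textbf{(a)} through \textbf{(g)}. The expected obstacle is the identification in the second step. The commutative-algebra argument via integrality is standard, so the remaining care is only in checking compatibility with the maps $t_i$, which is formal from the universal property.
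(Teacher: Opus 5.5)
Your proposal is correct and matches the paper's argument: the paper deduces the corollary from \cref{thm:bc}, applied to the localization $R\to R^\Zar$, together with \cref{prop:intZar}, whose proof is exactly your identification $R_i\otimes_R R^\Zar\cong R_i^\Zar$ (an integral algebra over an $I_0$-adically Zariskian ring is again Zariskian, and then the universal property of Zariskization gives the inverse), with \textbf{(e)} following because each $R_i^\Zar$ is Zariskian. One small correction: the fact you need for the Zariskian step is only that maximal ideals of an integral extension contract to maximal ideals; lying over and going up are not used.
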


The corollary follows from \cref{thm:bc} and \cref{prop:intZar} below.

\appendix
\def\thesection{\Alph{section}}

\section{Zariskizations of pairs}

In this appendix we give some of the results on Zariskizations of pairs used in the proof of \cref{cor:bc}. We refer to \cite[Chapter 0, \S7.3(b)]{FKI}, \cite[\S7]{NS24}, and \cite[\S3]{Tan18} as basic references for generalities of Zariskizations of pairs.

Recall that by a \emph{pair} we mean a couple $(A,I)$ consisting of a ring $A$ and an ideal $I$ of $A$. A \emph{morphism of pairs} $f\colon(A,I)\to(B,J)$ is a ring homomorphism $f\colon A\to B$ that is continuous with respect to the $I$-adic topology on $A$ and the $J$-adic topology on $B$. For example, the pairs $(A,I)$ and $(A,I^n)$ for $n\geq 1$ are isomorphic to each other.
We denote by $\mathsf{Pair}$ the category of pairs.

First of all, let us recall the definition of Zariskian pairs (\cite[Chapter 0, Proposition 7.3.2]{FKI}).

\begin{definition}
\label{def:Zariskian}
Let $(A,I)$ be a pair. We say that $A$ is \emph{$I$-adically Zariskian}, or that the pair $(A,I)$ is \emph{Zariskian}, if the following equivalent conditions are satisfied.
  \begin{enumerate}
  \item $1+I\subset A^\times$.
  \item An element $a\in A$ is invertible if and only if $a\ \mathrm{mod}\ I$ is invertible in $A/I$.
  \item $I$ is contained in the Jacobson radical of $A$.
  \end{enumerate}
\end{definition}

We denote by $\mathsf{Pair}^{\mathrm{Zar}}$ the full subcategory of $\mathsf{Pair}$ consisting of Zariskian pairs.

\begin{proposition}
The inclusion functor $\mathsf{Pair}^{\mathrm{Zar}} \hookrightarrow \mathsf{Pair}$ admits the left adjoint functor
\[
(-)^{\mathrm{Zar}} \colon \mathsf{Pair} \to \mathsf{Pair}^{\mathrm{Zar}};\quad (A,I)\mapsto (A^{\mathrm{Zar}},IA^{\mathrm{Zar}})=(A^{\mathrm{Zar}}_I,IA^{\mathrm{Zar}}_I)
\]
\end{proposition}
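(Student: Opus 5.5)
The plan is to write down the localization explicitly and check the universal property directly. For a pair $(A,I)$, put $S_I\coloneqq 1+I$, which is a multiplicative subset of $A$, and set $A^{\mathrm{Zar}}\coloneqq S_I^{-1}A$. The canonical map $\lambda\colon A\to A^{\mathrm{Zar}}$ will serve as the unit of the adjunction.

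The first step is to show that $(A^{\mathrm{Zar}},IA^{\mathrm{Zar}})$ is Zariskian. Take an element $1+x/s$ with $x\in I$ and $s\in 1+I$. It equals $(s+x)/s$, and $s+x\in 1+I$, so it is a unit. Every element of $1+IA^{\mathrm{Zar}}$ has this form, so condition (1) of \cref{def:Zariskian} holds. The map $\lambda$ is a morphism of pairs, since $\lambda(I^n)\subset (IA^{\mathrm{Zar}})^n$.

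The main step is the universal property. Let $f\colon(A,I)\to(B,J)$ be a morphism of pairs with $(B,J)$ Zariskian. Continuity gives some $n\geq 1$ with $f(I^n)\subset J$. We need $f(1+I)\subset B^\times$. For $a\in I$, consider
\[
(1+a)\bigl(1-a+a^2-\cdots+(-1)^{n-1}a^{n-1}\bigr)=1\pm a^n .
\]
The right-hand side lies in $f^{-1}(1+J)$, so it maps to a unit of $B$. Hence $f(1+a)$ is a unit. By the universal property of localization, $f$ factors uniquely through $\lambda$ via a ring map $g\colon A^{\mathrm{Zar}}\to B$. This $g$ is a morphism of pairs, because $g((IA^{\mathrm{Zar}})^{n})=g(I^nA^{\mathrm{Zar}})\subset J$. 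Uniqueness among morphisms of pairs follows from uniqueness among ring maps.

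I expect the only real subtlety to be exactly this point: morphisms of pairs are merely continuous, so $f(I)\subset J$ may fail. The power trick above handles it. Another way to say the same thing is that $(B,J)$ Zariskian implies $(B,\sqrt{J})$ Zariskian, since the Jacobson radical is radical, and $f(I)\subset\sqrt{J}$.

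Finally, functoriality and the adjunction follow formally from this universal arrow. On objects the functor is $(A,I)\mapsto(A^{\mathrm{Zar}},IA^{\mathrm{Zar}})$. Writing $A^{\mathrm{Zar}}_I$ records that the construction depends only on the adic topology: replacing $I$ by $I^n$ gives the same localization, by the same power argument, since $1+I^n\subset 1+I$ and every element of $1+I$ becomes invertible in $(1+I^n)^{-1}A$.
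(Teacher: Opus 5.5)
Your proposal is correct and follows essentially the paper's route: localize at $1+I$, and use the fact that $1+a$ divides $1\pm a^n$ to handle morphisms of pairs that only satisfy $f(I^n)\subset J$. The difference is only in packaging. The paper phrases the power trick as the isomorphism $(A,I^n)^{\mathrm{Zar}}\cong(A,I)^{\mathrm{Zar}}$, builds $f^{\mathrm{Zar}}$ for arbitrary target pairs, and calls the adjunction ``clear.'' You instead verify the universal arrow directly against Zariskian targets, including the Zariskian property of $S_I^{-1}A$ and the continuity of $g$, which makes the adjunction explicit.
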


\begin{proof}
For an arbitrary pair $(A,I)$, we set $A^{\mathrm{Zar}}=A^{\mathrm{Zar}}_I\coloneqq S^{-1}A$, where $S=1+I$ is the multiplicative subset consisting of all elements of the form $1+a$, with $a\in I$. Then it is clear that the pair $(A,I)^{\mathrm{Zar}}\coloneqq(A^{\mathrm{Zar}},IA^{\mathrm{Zar}})$ is Zariskian. 

To proceed, we need to prepare the following claim.

\begin{claim}
Let $(A,I)$ be a pair, and $n\geq 1$ an integer. Then the canonical morphism of pairs $\mathrm{can}\colon (A,I^n)^{\mathrm{Zar}}\to(A,I)^{\mathrm{Zar}}$ is an isomorphism.
\end{claim}

\begin{pfclaim}
Any element of $1+I$ of the form $1-a$, where $a\in I$, is invertible in $A^{\mathrm{Zar}}_{I^n}$ because $1-a=(1-a^n)^{-1}(1+a+a^2+\cdots+a^{n-1})$. Hence the canonical ring homomorphism $A^{\mathrm{Zar}}_{I^n}\to A^{\mathrm{Zar}}_I$ is an isomorphism, which proves the claim.
\end{pfclaim}

For a morphism of pairs $f\colon(A,I)\to(B,J)$, there exists an integer $n\geq 1$ such that $f(I^n)\subset J$. Then $f(1+I^n)\subset 1+J$, and thus $f$ induces a unique ring homomorphism $g\colon A^{\mathrm{Zar}}_{I^n}\to B^{\mathrm{Zar}}_J$. Hence, if we define $f^{\mathrm{Zar}}\colon(A,I)^{\mathrm{Zar}}\to(B,J)^{\mathrm{Zar}}$ as the composite $(A,I)^{\mathrm{Zar}}\xr{\mathrm{can}^{-1}}(A,I^n)^{\mathrm{Zar}} \xr{g} (B,J)^{\mathrm{Zar}}$, then we get a commutative diagram of pairs
\[
\begin{tikzcd}
(A,I^n) \rar["\id_A","\cong"'] \dar & (A,I) \rar["f"] \dar & (B,J) \dar \\
(A,I^n)^{\mathrm{Zar}} \rar["\mathrm{can}","\cong"'] 
& (A,I)^{\mathrm{Zar}} \rar["f^{\mathrm{Zar}}"] & (B,J)^{\mathrm{Zar}}
\end{tikzcd}
\]
This shows that $f^{\mathrm{Zar}}$ is independent of the choice of $n$, and we have a functor $(-)^{\mathrm{Zar}}$. It is clear that this gives the left adjoint functor of the inclusion functor $\mathsf{Pair}^{\mathrm{Zar}}\hookrightarrow\mathsf{Pair}$.
\end{proof}

We say that a morphism of pairs $(A,I)\to(B,J)$ is \emph{adic} if $IB$ is the ideal of definition of $B$, namely, the identity map $\id_B$ gives an isomorphism of pairs between $(B,J)$ and $(B,IB)$.

\begin{proposition}
\label{prop:intZar}
Let $f\colon(A,I)\to(B,J)$ be an adic morphism of pairs such that the ring homomorphism $f\colon A\to B$ is integral. Then the canonical morphism of pairs
\[
(B\otimes_A A^{\mathrm{Zar}}_I, I(B\otimes_A A^{\mathrm{Zar}}_I)) \to (B,J)^{\mathrm{Zar}}=(B^{\mathrm{Zar}}_{J}, JB^{\mathrm{Zar}}_{J})
\]
is an isomorphism.
\end{proposition}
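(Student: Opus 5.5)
We have to show that the canonical map $B\otimes_A A^{\mathrm{Zar}}_I\to B^{\mathrm{Zar}}_J$ is an isomorphism. Since $f$ is adic, $J$ and $IB$ define the same topology, so $J^n\subset IB\subset J^{m}$ for suitable $n,m$. By the Claim in the preceding proof, $(B,J)^{\mathrm{Zar}}=(B,IB)^{\mathrm{Zar}}$. We may therefore replace $J$ by $IB$ and prove that $S^{-1}B\cong T^{-1}B$, where $S=f(1+I)$ and $T=1+IB$.

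Because $B\otimes_A S_A^{-1}A=S^{-1}B$ with $S_A=1+I$, and $S\subset T$, there is a canonical map $S^{-1}B\to T^{-1}B$. This map is a further localization. To make it an isomorphism, it suffices to show that every element of $T$ is already a unit in $S^{-1}B$.

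So set $B'\coloneqq S^{-1}B$ and $A'\coloneqq A^{\mathrm{Zar}}_I$. Then $A'\to B'$ is integral, being a base change of an integral map. Moreover $IA'$ lies in the Jacobson radical of $A'$, since $A'$ is $I$-adically Zariskian. The key step is to show that $IB'$ lies in the Jacobson radical of $B'$; equivalently, every maximal ideal $\mathfrak n$ of $B'$ contains $IB'$.

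For this we use integrality. The contraction $\mathfrak n\cap A'$ is maximal in $A'$, because lying-over/going-up for integral extensions says that a prime contracting from a maximal ideal is maximal: $A'/(\mathfrak n\cap A')\hookrightarrow B'/\mathfrak n$ is an integral extension with a field on top, so the bottom is a field. Since $IA'$ is contained in every maximal ideal of $A'$, we get $I\subset\mathfrak n\cap A'$, and hence $IB'\subset\mathfrak n$. It follows that $1+IB'\subset B'^\times$. In particular the image of $T$ consists of units in $B'$, and the universal property of localization yields the inverse map $T^{-1}B\to S^{-1}B$. Both composites are the identity by uniqueness in the universal properties.

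The result is an isomorphism of rings, and the ideals match: $I(B\otimes_AA')=IB'$ corresponds to $JB^{\mathrm{Zar}}_J$ up to the same topology, by adicness. It is therefore an isomorphism of pairs.

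The only substantive step is the maximality of contractions under integral maps, which is standard (for example \cite[Theorem 9.3]{Mat2}). Some care is needed with the reduction from $J$ to $IB$, which relies on the Claim.
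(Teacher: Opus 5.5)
Your proof is correct and follows the same route as the paper: you reduce to $J=IB$, show that $B\otimes_A A^{\mathrm{Zar}}_I=S^{-1}B$ is $I$-adically Zariskian because it is integral over the Zariskian ring $A^{\mathrm{Zar}}_I$, and get the inverse from the universal property of localization; the only difference is that you prove the Zariskian step directly (contractions of maximal ideals under integral maps are maximal), whereas the paper cites \cite[Lemma 3.10 (2)]{INS25}. One small slip: adicness gives $J^n\subset IB$ and $(IB)^k\subset J$ for some $n,k$, not necessarily $IB\subset J^m$, but this does not affect the reduction to $J=IB$.
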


\begin{proof}
We may assume that $J=IB$.
Since $f$ is integral, so is the base change $A^{\mathrm{Zar}}_I \to B\otimes_AA^{\mathrm{Zar}}_I$. Hence $B\otimes_A A^{\mathrm{Zar}}_I$ is $I(B\otimes_A A^{\mathrm{Zar}}_I)$-adically Zariskian (\cite[Lemma 3.10 (2)]{INS25}). Thus by the universal property of $(B,IB)^{\mathrm{Zar}}_{IB}$, we obtain a map $(B,IB)^{\mathrm{Zar}}_{IB} \to (B\otimes_A A^{\mathrm{Zar}}_I, I(B\otimes_A A^{\mathrm{Zar}}_I))$. This gives an inverse map by the universality of Zariskizations and pushouts.
\end{proof}


\end{document}